\def\P{{\mathbb P}}
\def\E{{\mathbb E}}
\def\Z{{\mathbb Z}}
\def\N{{\mathbb N}}
\def\R{{\mathbb R}}
\def\1{{\mathbf 1}}
\numberwithin{equation}{section}
\newtheorem{theorem}{Theorem}[section]
\newtheorem{lemma}[theorem]{Lemma}
\newtheorem{corollary}[theorem]{Corollary}
\newtheorem{proposition}[theorem]{Proposition}
\newenvironment{proof}[1][Proof.]{\textbf{#1} }{\hfill $\blacksquare$}
\def\beq{\begin{equation}}
\def\eeq{\end{equation}}
\newcommand{\bei}{\begin{itemize}}
	\newcommand{\eei}{\end{itemize}}
\newcommand{\ben}{\begin{enumerate}}
	\newcommand{\een}{\end{enumerate}}
\newcommand{\beqn}{\begin{eqnarray}}
\newcommand{\beqnn}{\begin{eqnarray*}}
\newcommand{\eeqn}{\end{eqnarray}}
\newcommand{\eeqnn}{\end{eqnarray*}}
\newcommand{\brm}{\begin{rmk}}
\newcommand{\erm}{\end{rmk}}
\begin{document}
\title{Extinction time of an epidemic with infection age dependent infectivity}


\author{
Anicet Mougabe--Peurkor \footnote{  \scriptsize{Universit\'e F\'elix Houphouet Boigny, Abidjan, C\^{o}te d’Ivoire. mougabeanicet@yahoo.fr
}}, \quad 
Ibrahima~Dram\'{e}  \footnote{ \scriptsize{Universit\'e Cheikh Anta Diop de Dakar, FST, LMA, 16180 Dakar-Fann, S\'en\'egal. iboudrame87@gmail.com
}},
\quad 
Modeste N'zi \footnote{ \scriptsize{Universit\'e F\'elix Houphouet Boigny, Abidjan, C\^{o}te d’Ivoire. modestenzi@yahoo.fr}}
\\ \quad and \quad 
Etienne~Pardoux \setcounter{footnote}{6}\footnote{ \scriptsize {Aix Marseille Univ, CNRS, I2M, Marseille, France
. etienne.pardoux@univ-amu.fr}}
}

\maketitle


\begin{abstract}
This paper studies the distribution function of the time of extinction of a subcritical epidemic, when a large 
enough proportion of the population has been immunized and/or the infectivity of the infectious individuals has been reduced, so that the effective reproduction number is less than one. We do that for a SIR/SEIR model, where infectious individuals have an infection age dependent infectivity, as in the model introduced in the 1927 seminal paper of Kermack and McKendrick \cite{KMK}. Our main conclusion is that simplifying the model as an ODE SIR model, as it is largely done in the epidemics literature, introduces a biais toward shorter extinction time.
\end{abstract}

\vskip 3mm
\noindent{\textbf{Keywords}: } Epidemic model; Branching process; Extinction time; Infection age dependent infectivity; ODE SIR model; Effective reproduction number.
\vskip 3mm 

\section{Introduction}

Consider an epidemic which is declining: the number $M$ of infected individuals is moderate, and decreases,
while the total size $N$ of the population is much larger.
In such a phase, the approximation by the deterministic model is no longer valid. Rather, as the initial phase of an epidemic, the final phase can be well approximated by a branching process, in this case a subcritical branching process. The extinction time is thus random. It is of interest to have some information on the distribution function of this extinction time. Indeed, if the subcriticality is due in part to some rules imposed to the population, like mask wearing in public transport, classrooms, workplace, theaters etc., it is important to evaluate how long such rules must be maintained. 

Our epidemic model is a SIR/SEIR model, i.e. we assume that after having been infected and having recovered, an individual remains immune to the disease for ever. This is not quite realistic. However, if the duration of the studied period is not too long, then the number of individuals who loose their immunity during that period can be neglected.
On the other hand, the stochastic SIR/SEIR model upon which we base our analysis is non Markov. Following the ideas of Kermack--McKendrick \cite{KMK} and Forien, Pang, Pardoux \cite{RPE}, we consider a model where 
the infectivity of each infectious individual is infection age dependent (and random, the realizations corresponding to various individuals being i.i.d.). We characterize the distribution function of the extinction time 
of the approximating non Markov branching process with a single ancestor as the unique solution of a Volterra--type integral equation, for which we give a converging numerical approximation. The derivation of the equation is based upon a methodology introduced by Crump and Mode \cite{CM68}. From this result, we deduce in
Theorem \ref{th_et} a formula for the time we have to wait after $t_0$ for the epidemic to go extinct, if at time $t_0$ we have $M$ infected individuals in a population of size $N$, with $M<<N$.

With the help of a numerical scheme, we compute an approximation of the distribution function of the time of extinction, and compare the result with the distribution function of the extinction time of a Markov branching process which approximates the classical Markov SIR model (whose law of large numbers limit is the most standard SIR ODE model), which is known explicitly. This comparison is done between two models which have both the same effective reproduction number $R_{eff}$ (the mean number of ``descendants''  one infectious individual has at this stage of the epidemic), and the same rate $\rho$ of continuous time exponential decrease. Our conclusion is that the usual ODE SIR model leads to an underestimation of the extinction time.

Our work was inspired by the recent work of Griette et al. \cite{QGM20}, where the authors neglect the new infections during the final phase. Note that this approximation is justified by the data, in the case of the end of the Covid epidemic  in Wuhan in 2020.
Our work does not make such a simplifying assumptions, and allows a very general law for the varying infectivity, and a completely arbitrary law for the duration of the infectious period.

The paper is organized as follows. We present our varying infectivity SIR model in Section 2, together with its branching process approximation. In Section 3, we study the distribution function of the extinction time of the branching process.  In section 4, we present several examples of SIR/SEIR models, including the classical ODE SIR model, ODE SEIR model, and we specify the type of varying infectivity  which we have in mind. In section 5,
we compare the time of extinction of the branching approximations to our varying infectivity model, and to the 
ODE SIR model. In section 6, we discuss the results obtained in that comparison. Finally in section 7 (the Appendix), we establish the convergence of a numerical approximation scheme of the equation established in section 3.

{\bf Notations} In what follows we shall use the following notations: $\mathbb{Z} = \{..., -2, -1, 0, 1, 2, ...\}$, $\R=(-\infty,\infty)$, $\R_+=[0,\infty)$ and $\R_-=[0,\infty)$. For $x\in \R_+$, $[x]$ denotes the integer part of $x$ and $\lceil x \rceil$ ( resp. $ \lfloor x \rfloor$) denotes the ceiling function (resp. the floor function). For $x\in \R$, $x^+$ (resp. $x^-$) denotes the positive part of $x$ (resp. the negative part of $x$). For $(a,b)\in \R^2$, $a<b$, $\mathcal{U}([a,b])$ denotes the uniform distribution on $[a,b]$. $D([0,\infty))$ denotes the space of functions from $[0,\infty)$ into $\mathbb{R}$ which are right continuous and have left limits at any $t>0$. We shall always equip the space $D([0,\infty))$ with the Skorohod topology, for the definition of which we refer the reader to Billingsley \cite{Bill}.

\section{The SIR model with Varying Infectivity}
\subsection{The epidemic model}
Let $\{\lambda_j(t),\ t\ge0\}$,  $j\in\Z\backslash\{0\}$ be a collection of mutually independent non negative functions, which are such that the $\{\lambda_j\}_{j\ge 1}$ are identically distributed, as well as the $\{\lambda_j\}_{j\le -1}$. We assume that these function belongs $a.s.$ to $D([0,\infty))$.
We consider a SIR model which is such that the $j$--th initially infected individual has the infectivity $\lambda_{-j}(t)$ at time $t$, while the $j$--th individual infected after time $0$ has at time $t$ the infectivity $\lambda_j(t-\tau_j)$, if $0<\tau_1<\cdots<\tau_\ell<\cdots$ denote the successive times of infection in the population. The quantity $t-\tau_j$ is the age of infection of individual $j$ at time $t$. Note that we assume that $\lambda_j$ vanishes on $\R_{-}$. The newly infected individual is chosen uniformly at random in the population, and if that individual is susceptible, then it jumps from the S to the I compartment at its time of infection while nothing happens if the individual is not susceptibe.  Examples of function $\lambda_j(t)$ will be given below. That function can be first zero during the exposed period, then the individual becomes infectious, and at age of infection
$\eta_j=sup\{t,\ \lambda_j(t)>0\}$, the individual recovers (i.e. jumps into the R compartment) and is immune for ever. Clearly an important quantity is the total force of infection in the population at time $t$: $\mathfrak{F}^N(t)$, which is the sum of all the infectivities of the infected individuals at that time.
Here $N$ is the total number of individuals in the population. The sum of the numbers of individuals in the three compartments is constant in time : $S^N(t)+I^N(t)+R^N(t)=N$ for all $t\ge0$. For $X=S, \mathfrak{F}, I, \text{ or }R$, 
we define the renormalized quantity $\bar{X}^N(t)=X^N(t)/N$. The main result of \cite{RPE} is that as $N\to\infty$, 
$(\bar{S}^N(t),\bar{\mathfrak{F}}^N(t),\bar{I}^N(t), \bar{R}^N(t))\to (\bar{S}(t),\bar{\mathfrak{F}}(t),\bar{I}(t), \bar{R}(t))$, where the limit is the unique solution of the following system of integral equations, which already appears in the seminal paper of Kermack and McKendrick \cite{KMK}:
\begin{align}\label{AB}
\begin{cases}
\bar{S}(t)&=\bar{S}(0)-\int_0^t \bar{S}(s)\bar{\mathfrak{F}}(s)ds,\\
\bar{\mathfrak{F}}(t)&=\bar{I}(0)\bar{\lambda}^0(t)+\int_0^t\bar{\lambda}(t-s)\bar{S}(s)\bar{\mathfrak{F}}(s)ds,\\
\bar{I}(t)&= \bar{I}(0)F_0^c(t)+\int_0^t F^c(t-s)\bar{S}(s)\bar{\mathfrak{F}}(s)ds, \\
\bar{R}(t)&=\bar{R}(0)+\bar{I}(0)F_0(t)+\int_0^t F(t-s)\bar{S}(s)\bar{\mathfrak{F}}(s)ds\,,
\end{cases}
\end{align}
where $\bar{\lambda}^0(t)=\E[\lambda_{-1}(t)]$ and $\bar{\lambda}(t)=\E[\lambda_1(t)]$, $F_0$ (resp. $F$)
is the distribution function of $\eta_{-1}$ (resp. of $\eta_1$) and $F_0^c(t)=1-F_0(t)$, $F^c(t)=1-F(t)$. This convergence holds true provided that $\lambda\in D$ a.s. and for some $\lambda^\ast>0$, $0\le\lambda_j(t)\le\lambda^\ast$ a.s. for all $j\in\mathbb{Z}$ and $t\ge0$, see \cite{RGE3}. The original proof in \cite{RPE} puts more restrictions on $\lambda$.

\subsection{The branching process approximation}
Suppose that at time $t_0$ only a moderate number $M<<N$ of individuals in the population is infected, and the mean number $R_{eff}=\bar{S} (t_0)\int_0^\infty\bar{\lambda}(t)dt$ of individuals which an infected individual infects satisfies $R_{eff}<1$. Then the epidemic is declining. It can be well approximated by the following non Markovian continuous time branching process. We will study its extinction time in te next section, and deduce a good approximation of the time we have to wait after $t_0$ for the epidemic to go extinct. Note that we approximate the proportion $\bar{S}(t)$ by $\bar{S}(t_0)$, for any $t\ge t_0$.

We consider the branching process $Z(t)$ with (to start with) a single ancestor at time $0$. The $j$--th individual in the population, independently from all other individuals, lives for a duration $\eta_j$, and during its life time gives birth to children (one at a time) at rate $\bar{S} (t_0)\lambda_j(t)$, where $(\eta_j,\lambda_j)$ are as above. This is clearly a non Markov continuous time branching process, which belongs to the class of Crump--Mode--Jagers branching processes. 

\section{The extinction time of the branching process associated to the varying infectivity model}\label{sec3}
For now on, $t\ge0$ stands for $t-t_0$ ($t\ge t_0$).
We define $\hat{\lambda}(t):=\bar{S} (t_0)\lambda(t)$. Let $Z(t)$ denote the number of descendants at time $t$ of an individual born (i.e. infected) at time $0$, in the continuous time branching process which describes the number of infected individuals at time $t$. 
This ancestor infects susceptible individuals during the time interval $[0,\eta]$, at the random and varying rate $\hat{\lambda}(t)$. His descendants have the same behaviour, each one independently from all the others.

In this paper, we make the following assumption on the infectivity function. 

\textbf{Assumption} {(\bf H)}  We shall assume that there exists a constant $\lambda^* >0$ such that 
\[ \lambda(t)\le \lambda^* \ \mbox{almost surely, for all }t\ge0.\]
Let $T_{ext}  = \inf\{t>0 : Z(t) = 0\}$ denote the extinction time of the epidemic, $G(s,t) = \mathbb{E}\left(s^{Z_t} \right)$, $|s|\le1$, denote the probability generating function of $Z(t)$ and $F(t)=G(0,t)$ the distribution function of the extinction time. 

\subsection{Distribution function of the extinction time}
In this subsection, we will characterize the distribution function of the extinction time of $Z$ as the unique solution of an integral equation. To this end, we imitate the computations done in the proof of Theorem 4.1 in \cite{CM69}. We first start by determining the generating function $G(s,t)$ of $Z$ in order next to deduce the distribution function of the extinction time. 

Denote by $Z_0(t)$ the descendance of the ancestor at time $t$, and for $j\ge1$, $Z_j(t)$ the descendance of the $j$--th direct descendant of the ancestor at time $t$ after its birth. Then $\{Z_j(.) , j\ge  0\}$ is a sequence of independent and identically distributed (i.i.d) random processes which have the  law of $Z$. In order to simplify our notations, we will write $\hat{\lambda}_0$ (resp. $\eta_0$) for the value of $\hat{\lambda}$ (resp. $\eta$) associated with $Z_0$ . Formula (3.1) from \cite{CM68} reads
\begin{equation}\label{eq:31}
Z_0(t)=\mathds{1}_{\eta_0 >t}+\sum_{j=1}^{Q_0(t)} Z_j(t-t^{j}),
\end{equation}
where $Q_0(t)$ is the number of direct descendants of the ancestor born on the time interval $(0,t]$. Moreover, $Q_0(t)$ is a counting process, which conditionnally upon $\hat{\lambda}_0(\cdot)$, is a nonhomogeneous Poisson process with varying intensity $\hat{\lambda}_0(t)$, and $0<t^1<t^2<\cdots$ are the successive jump times of the process $Q_0(t)$. 

We have
\begin{proposition}\label{p1}
	The probability generating function $G$ satisfies the following integral equation
	\begin{align*}
	G(s,t) = \E\left[s^{\mathds{1}_{\eta>t}}\exp\Bigg\{\int_{0}^{t}\Big(G(s,t-u)-1\Big)\hat{\lambda}(u)du\Bigg\}\right].
	\end{align*}
\end{proposition}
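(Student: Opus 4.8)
The plan is to start from the branching decomposition \eqref{eq:31} and compute $G(s,t)=\E[s^{Z_0(t)}]$ by successive conditioning, exploiting the fact that, given the infectivity path $\hat\lambda_0(\cdot)$, the offspring process $Q_0$ is an inhomogeneous Poisson process with intensity $\hat\lambda_0$. First I would condition on the whole trajectory $\hat\lambda_0(\cdot)$. Since $\eta_0=\sup\{t:\lambda(t)>0\}$ is a measurable functional of this path (recall $\hat\lambda_0=\bar{S}(t_0)\lambda_0$ with $\bar{S}(t_0)>0$, so the two functions have the same support), the factor $s^{\mathbf{1}_{\eta_0>t}}$ is $\hat\lambda_0$-measurable and can be pulled out of the conditional expectation.

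Conditionally on $\hat\lambda_0$, the counting process $Q_0$ is independent of the i.i.d.\ subtrees $\{Z_j\}_{j\ge1}$, each distributed as $Z$. Conditioning further on the realization of $Q_0$, hence on its jump times $t^1<t^2<\cdots$, independence of the subtrees gives $\E\bigl[\prod_{j=1}^{Q_0(t)}s^{Z_j(t-t^j)}\mid\hat\lambda_0,Q_0\bigr]=\prod_{j=1}^{Q_0(t)}G(s,t-t^j)$, using $\E[s^{Z_j(t-t^j)}\mid t^j]=G(s,t-t^j)$ and the fact that $|s|\le1$ keeps every factor bounded by $1$.

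The heart of the argument is then to average this product over the Poisson process given $\hat\lambda_0$. Writing $\Lambda=\int_0^t\hat\lambda_0(u)\,du$, which is finite by Assumption {(\bf H)}, I would use the order-statistics property (given $Q_0(t)=n$, the $n$ birth times are i.i.d.\ on $[0,t]$ with density $\hat\lambda_0(\cdot)/\Lambda$) together with the Poisson law of $Q_0(t)$. Summing the resulting series over $n$ yields the probability generating functional identity $\E\bigl[\prod_{j=1}^{Q_0(t)}g(t^j)\mid\hat\lambda_0\bigr]=\exp\{\int_0^t(g(u)-1)\hat\lambda_0(u)\,du\}$, which applied to $g(u)=G(s,t-u)$ produces exactly the exponential factor in the statement. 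Combining this with the pulled-out indicator factor and taking the outer expectation over $\hat\lambda_0$ (whose law, jointly with $\eta_0$, coincides with that of $(\hat\lambda,\eta)$) gives the claimed equation.

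I expect the main obstacle to be the careful bookkeeping of the conditional independence structure: one must verify that, given the path $\hat\lambda_0$, the subtrees $Z_j$ remain independent of the birth times and distributed as $Z$, so that the two conditioning layers (first over the $Z_j$, then over $Q_0$) can be cleanly disentangled before the Poisson generating-functional computation is invoked. All the interchanges of sum, product and expectation are justified by the uniform bound $|s^{Z}|\le1$ and the finiteness of $\Lambda$; these are routine and I would not dwell on them.
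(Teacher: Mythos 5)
Your proposal is correct and follows essentially the same route as the paper's proof: conditioning on the path $\hat{\lambda}_0$, pulling out $s^{\mathds{1}_{\eta_0>t}}$ as $\hat{\lambda}_0$-measurable, using the order-statistics property of the inhomogeneous Poisson process given $Q_0(t)=k$ together with conditional independence of the subtrees, and summing the resulting series into the exponential (the paper carries out by hand, via iterated integrals, exactly the Poisson generating-functional identity you invoke) before de-conditioning using the equality in law of $(\hat{\lambda}_0,\eta_0)$ and $(\hat{\lambda},\eta)$. No gaps beyond what the paper itself leaves implicit.
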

\begin{proof} 
Since $Z$ has the same law as $Z_0$, we first to compute $\mathbb{E}\left[s^{Z_0(t)}|\hat{\lambda}_0 \right]$ in order to deduce the value of $G$. From \eqref{eq:31}, we deduce that 
\begin{align*}
\mathbb{E}\left[s^{Z_0(t)}|\hat{\lambda}_0 \right]=&
 \sum_{k=0}^{\infty} s^{\mathds{1}_{\eta_0 >t}} \mathbb{P}(Q_0(t) = k|\hat{\lambda}_0)\mathbb{E}\left\{\prod_{j=1}^{k}s^{Z_j(t-t^j)}\Big|Q_0(t) = k, \hat{\lambda}_0\right\} \\
=& \sum_{k=0}^{\infty}s^{\mathds{1}_{\eta_0>t}}\mathbb{P}(Q_0(t) = k|\hat{\lambda}_0) \mathbb{E}\left\{\prod_{j=1}^{k} G(s,t-t^j)\Big|Q_0(t) = k, \hat{\lambda}_0\right\}\\
=& \sum_{k=0}^{\infty}s^{\mathds{1}_{\eta_0>t}}\mathbb{P}(Q_0(t) = k|\hat{\lambda}_0) \frac{k!}{\Big(\int_{0}^{t}\hat{\lambda}_0(v)dv\Big)^k}\times \\
&\int_{0}^{t}\int_{0}^{u_k}...\int_{0}^{u_2}\prod_{j=1}^{k}G(s,t-u_j)\hat{\lambda}_0(u_1)...\hat{\lambda}_0(u_k)du_1...du_k\\
=& s^{\mathds{1}_{\eta_0>t}}\exp\Bigg(-\int_{0}^{t}\hat{\lambda}_0(v)dv\Bigg)\times \\
&\sum_{k=0}^{\infty} \int_{0}^{t}\int_{0}^{u_k}...\int_{0}^{u_2}\prod_{j=1}^{k}G(s,t-u_j)\hat{\lambda}_0(u_1)...\hat{\lambda}_0(u_k)du_1...du_k\\
=& s^{\mathds{1}_{\eta_0>t}}\exp\Bigg(-\int_{0}^{t}\hat{\lambda}_0(v)dv\Bigg)\sum_{k=0}^{\infty}  \frac{1}{k!}\Bigg(\int_{0}^tG(s,t-u)\hat{\lambda}_0(u)du\Bigg)^k
\\ 
=& s^{\mathds{1}_{\eta_0>t}}\exp\Bigg\{\int_{0}^{t}\Big(G(s,t-u)-1\Big)\hat{\lambda}_0(u)du\Bigg\}.
\end{align*}
The third equality exploits the well known result on the law of the times of the jumps of a Poisson process on a given interval, given the number of those jumps (see Exercise 6.5.4 in \cite{PE}, which treats the case of a constant rate, the general case follows via an obvious time change), and the fourth equality the conditional law of $Q_0(t)$, given $\hat{\lambda}_0$.  We thus obtain that
\begin{align*}
G(s,t) = \E\left[s^{\mathds{1}_{\eta_0>t}}\exp\Bigg\{\int_{0}^{t}\Big(G(s,t-u)-1\Big)\hat{\lambda}_0(u)du\Bigg\}\right].
\end{align*}
Since $(\hat{\lambda}_0,\eta_0)$ has the same law as $(\hat{\lambda},\eta)$, we can drop the subindices 0 in the last formula, yielding the formula of the statement.
\end{proof} 

The term $s^{\mathds{1}_{\eta>t}}$ can be written as follows:
$s^{\mathds{1}_{\eta>t}} = \mathds{1}_{\eta\leq t} + s\mathds{1}_{\eta>t}$. From this, we deduce readily the following Corollary for $F(t)=G(0,t)$.
\begin{corollary}\label{cor1}  The distribution function $F$ of the extinction time of the branching process with one unique ancestor born at time $0$ satisfies the following integral equation:
\begin{align}\label{ef1}
F(t) = \E\left[ \mathds{1}_{\eta \leq t}\exp\Bigg\{\int_{0}^{t}\Big(F(t-u)-1\Big)\hat{\lambda}(u) du\Bigg\}\right].
\end{align}
\end{corollary}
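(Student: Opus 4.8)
The plan is to derive the corollary by specializing Proposition \ref{p1} to $s=0$, so that the entire argument reduces to evaluating the integrand of that identity at this single value. First I would record the elementary pointwise identity
\[
s^{\mathds{1}_{\eta>t}} = \mathds{1}_{\eta\leq t} + s\,\mathds{1}_{\eta>t},
\]
valid for every $s$, which holds because the random exponent $\mathds{1}_{\eta>t}$ takes only the values $0$ (on the event $\{\eta\leq t\}$, where $s^0=1$) and $1$ (on $\{\eta>t\}$, where $s^1=s$). At $s=0$ this decomposition collapses the prefactor inside the expectation to the single indicator $\mathds{1}_{\eta\leq t}$. The convention $0^0=1$ that is implicit when reading $s^{\mathds{1}_{\eta>t}}$ at $s=0$ is exactly the one encoded in the right-hand side of the identity above, so no ambiguity arises.

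Next I would substitute $s=0$ into the exponent as well. Because $F(\cdot)=G(0,\cdot)$ by definition, each factor $G(s,t-u)$ inside the integral becomes $G(0,t-u)=F(t-u)$, while $\hat\lambda(u)$ is unaffected. Since Proposition \ref{p1} is an equality of functions of $s$ valid on the whole disc $|s|\le1$, I may simply evaluate both sides at $s=0$: the left-hand side is $G(0,t)=F(t)$, and the right-hand side becomes
\[
\E\left[\mathds{1}_{\eta\leq t}\exp\Bigg\{\int_{0}^{t}\Big(F(t-u)-1\Big)\hat\lambda(u)\,du\Bigg\}\right],
\]
which is precisely \eqref{ef1}.

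As this is an immediate corollary, I do not expect a genuine obstacle; the argument is essentially a substitution. The only two points deserving a word of care are the ones isolated above. First, that $s=0$ is admissible in Proposition \ref{p1}: it is, since both the generating function $G(s,t)=\E(s^{Z_t})$ and the identity are defined for all $|s|\le1$, with the boundary value $0^0=1$ handled cleanly by the decomposition. Second, that $F(t)=G(0,t)$ is genuinely the distribution function of $T_{ext}$: this holds because $G(0,t)=\E[0^{Z(t)}]=\P(Z(t)=0)$, and the state $0$ is absorbing for the branching process (an extinct population produces no further offspring), so $\{Z(t)=0\}=\{T_{ext}\le t\}$ and hence $G(0,t)=\P(T_{ext}\le t)$, as asserted in the statement. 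With these two remarks in place the corollary follows at once from Proposition \ref{p1}.
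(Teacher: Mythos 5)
Your argument is correct and follows exactly the paper's route: the paper likewise records the decomposition $s^{\mathds{1}_{\eta>t}} = \mathds{1}_{\eta\leq t} + s\,\mathds{1}_{\eta>t}$ and deduces the corollary by evaluating Proposition \ref{p1} at $s=0$ with $F(t)=G(0,t)$. Your added remarks (admissibility of $s=0$ via the convention $0^0=1$, and the identification $\{Z(t)=0\}=\{T_{ext}\le t\}$ through absorption at $0$) merely spell out what the paper leaves implicit.
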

The fact that \eqref{ef1} characterizes $F$ follows from the following crucial result. 
\begin{proposition}\label{unicite}
	Equation \eqref{ef1} has a unique $[0,1]$-valued solution.
\end{proposition}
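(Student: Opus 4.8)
The plan is to read \eqref{ef1} as a fixed-point problem $F = \Phi F$ and to apply the Banach fixed-point theorem. I would define, for any measurable $g:\R_+\to[0,1]$,
\[
(\Phi g)(t) := \E\left[\mathds{1}_{\eta\le t}\exp\left\{\int_0^t\big(g(t-u)-1\big)\hat\lambda(u)\,du\right\}\right].
\]
First I would check that $\Phi$ maps the set $\EE$ of measurable $[0,1]$-valued functions on $\R_+$ into itself: since $g\le 1$ the integrand $(g(t-u)-1)\hat\lambda(u)$ is nonpositive, so the exponential lies in $[0,1]$; multiplying by $\mathds{1}_{\eta\le t}\in\{0,1\}$ and taking expectation keeps the value in $[0,1]$. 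The uniform bound $\hat\lambda(u)=\bar S(t_0)\lambda(u)\le\lambda^*$ a.s. coming from Assumption (H) guarantees the inner integral is finite, so $\Phi$ is well defined, and $\EE$ is a closed subset of the Banach space of bounded measurable functions, hence a complete metric space for the norms used below.

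Second, the contraction estimate. For $g_1,g_2\in\EE$ set $A_i(t)=\int_0^t(g_i(t-u)-1)\hat\lambda(u)\,du\le 0$. Using the elementary inequality $|e^a-e^b|\le|a-b|$ valid for $a,b\le 0$, then bounding $\mathds{1}_{\eta\le t}\le 1$ and $\hat\lambda(u)\le\lambda^*$ a.s., and applying Tonelli's theorem (all integrands are nonnegative), I would obtain
\[
|(\Phi g_1)(t)-(\Phi g_2)(t)|\le\E\left[\int_0^t|g_1(t-u)-g_2(t-u)|\,\hat\lambda(u)\,du\right]\le\lambda^*\int_0^t|g_1(s)-g_2(s)|\,ds,
\]
after the change of variable $s=t-u$. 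The key point is that $g_1,g_2$ are deterministic, so the expectation acts only on $\hat\lambda$ (and the indicator, which is merely bounded by $1$); this is where the randomness is disposed of.

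Third, I would upgrade this linear Volterra bound to a genuine contraction by working with the weighted (Bielecki) norm $\|g\|_\beta:=\sup_{t\ge 0}e^{-\beta t}|g(t)|$. Multiplying the previous inequality by $e^{-\beta t}$ gives
\[
e^{-\beta t}|(\Phi g_1)(t)-(\Phi g_2)(t)|\le\lambda^*\int_0^t e^{-\beta(t-s)}\,e^{-\beta s}|g_1(s)-g_2(s)|\,ds\le\frac{\lambda^*}{\beta}\,\|g_1-g_2\|_\beta,
\]
so that $\|\Phi g_1-\Phi g_2\|_\beta\le(\lambda^*/\beta)\|g_1-g_2\|_\beta$. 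Choosing $\beta>\lambda^*$ makes $\Phi$ a strict contraction on $(\EE,\|\cdot\|_\beta)$, and the Banach fixed-point theorem produces a unique fixed point in $\EE$, i.e. a unique $[0,1]$-valued solution of \eqref{ef1}.

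I expect the only delicate points to be purely technical: the measurability of $t\mapsto(\Phi g)(t)$ and the interchange of expectation with the $du$-integration, both of which follow routinely from the boundedness furnished by (H). The genuine conceptual step, passing from the merely Lipschitz Volterra estimate to a global contraction, is handled cleanly by the weighted norm, so that no bootstrapping over successive intervals $[0,T],[T,2T],\dots$ is required; a short-interval contraction followed by iteration would be an equally valid alternative.
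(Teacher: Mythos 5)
Your proof is correct, and its central estimate coincides exactly with the paper's: both arguments bound the difference of exponentials via $|e^{a}-e^{b}|\le |a-b|$ for $a,b\le 0$, use the almost sure bound on $\hat\lambda$ furnished by Assumption (H) (the paper writes $\hat\lambda^*=\bar S(t_0)\lambda^*$; your cruder bound $\lambda^*$ also works since $\bar S(t_0)\le 1$), and arrive at the linear Volterra inequality $|F^1(t)-F^2(t)|\le \hat\lambda^*\int_0^t |F^1(s)-F^2(s)|\,ds$. Where you genuinely diverge is in how this inequality is closed and in where existence comes from. The paper applies Gronwall's lemma, which yields uniqueness only, and gets existence for free from probability: by Corollary \ref{cor1}, the distribution function of the extinction time already solves \eqref{ef1}. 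You instead set up the map $\Phi$ on the set $\EE$ of measurable $[0,1]$-valued functions, pass to the Bielecki norm $\|g\|_\beta=\sup_{t\ge 0}e^{-\beta t}|g(t)|$ with $\beta>\lambda^*$, and invoke the Banach fixed-point theorem, obtaining existence and uniqueness in one stroke, purely analytically, with no reference to the branching process. Each route has its merits: the paper's is shorter given the probabilistic groundwork already laid in Proposition \ref{p1} and Corollary \ref{cor1}, while yours shows the integral equation is well posed independently of the stochastic model (which is pleasant, for instance, in view of the numerical scheme of the Appendix), at the cost of the fixed-point scaffolding; your Bielecki norm also neatly avoids both Gronwall and any bootstrapping over successive intervals.

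One small correction of wording, not a gap in substance: on the unbounded time interval $[0,\infty)$, ``the Banach space of bounded measurable functions'' is not the right ambient space for $\|\cdot\|_\beta$, since $\|\cdot\|_\beta$-Cauchy sequences of bounded functions can converge to unbounded limits (take $g_n(t)=\min(e^{\beta t/2},n)$, which is $\|\cdot\|_\beta$-Cauchy with unbounded limit $e^{\beta t/2}$). What saves your argument is that $\EE$ itself is complete under $\|\cdot\|_\beta$: convergence in this norm implies pointwise convergence, so a $\|\cdot\|_\beta$-limit of $[0,1]$-valued measurable functions is again in $\EE$; equivalently, work in the ambient space of measurable $g$ with $\sup_{t\ge0}e^{-\beta t}|g(t)|<\infty$, in which $\EE$ is closed. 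With that phrasing repaired, your argument is complete, and the measurability and Fubini--Tonelli points you flag are indeed routine under (H), since the paths of $\hat\lambda$ lie in $D([0,\infty))$ and are uniformly bounded.
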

\begin{proof} The distribution function of the extinction time solves this equation. Let us show that this equation has at most one $[0,1]$-valued solution. To this end, suppose that the equation has two solutions $F^1$ and $F^2$ which are upper bounded by $1$. We have 
	\begin{equation*}
	F^1(t)-F^2(t) =  \E \Bigg[ \mathds{1}_{\eta \leq t}\Bigg(\exp\Bigg\{\int_{0}^{t}\Big(F^1(t-u)-1\Big)\hat{\lambda}(u) du\Bigg\}-\exp\Bigg\{\int_{0}^{t}\Big(F^2(t-u)-1\Big)\hat{\lambda}(u) du\Bigg\}\Bigg)\Bigg].
	\end{equation*}
 From the fact that $|e^{-x}-e^{-y}|\le |x-y|$, $\forall x,y>0$, we deduce that 
	\begin{align*}
	\Big|F^1(t)-F^2(t)\Big| &\leq  \E \left[\int_{0}^{t}\hat{\lambda}(u)\Big|F^1(t-u)-F^2(t-u)\Big| du\right]\\
	&\leq  \hat{\lambda}^*\int_{0}^{t}\Big|F^1(u)-F^2(u)\Big| du,
	\end{align*}
where we have used assumption {(\bf H)} and the notation $\hat{\lambda}^\ast=\bar{S}(t_0)\lambda^\ast$.	 The desired result follows by combining this with Gronwall's lemma.
\end{proof}

\subsection{Epidemic starting at time $\chi<0$.}
Now let us consider the case where the ancestor has been infected at a random time $\chi<0$. Then the total progeny at time $t$ of this ancestor can be written as follows:
\begin{equation*}
Z_0(t)=\mathds{1}_{\eta_0 >t+\chi}+\sum_{j=1}^{Q_0(t)} Z_j(t-t^{j}).
\end{equation*}
From an easy adaptation of the argument used in the proof of Proposition \ref{p1}, we deduce the  
\begin{proposition}\label{p1p} The distribution function $F_\chi$ of the extinction time of the epidemic  starting with a unique ancestor at time $0$, who was born at time $\chi(<0)$ satisfies :
\begin{align*}
F_\chi(t) = \E\left[ \mathds{1}_{\eta \leq t+\chi}\exp\left\{\int_{0}^{t}\Big(F_\chi(t-u)-1\Big)\hat{\lambda}(u-\chi) du\right\}\right].
\end{align*}
\end{proposition}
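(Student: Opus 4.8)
The plan is to re-run the proof of Proposition~\ref{p1} almost word for word, the single new ingredient being the time-shift caused by the fact that an ancestor infected at $\chi<0$ already carries infection age $-\chi$ at the observation origin $t=0$. Thus at observation time $u\ge0$ the ancestor's infection age is $u-\chi$, so its infectivity is $\hat{\lambda}_0(u-\chi)$ and, consistently, it is still present at time $t$ exactly when its age $t-\chi$ has not reached $\eta_0$, i.e. when $\eta_0>t-\chi$. I would start from the decomposition of $Z_0(t)$ displayed above, now reading $Q_0$ as the counting process of the offspring of the ancestor born on $(0,t]$: conditionally on $\hat{\lambda}_0$ this is an inhomogeneous Poisson process of intensity $\hat{\lambda}_0(\cdot-\chi)$, with successive jump times $0<t^1<t^2<\cdots$.

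Next I would condition on $\{Q_0(t)=k\}$ and on $\hat{\lambda}_0$, exactly as before. The point to keep in mind is that the direct descendants are born at the positive times $t^j$, so each of them starts a fresh subtree distributed as $Z$; hence the conditional joint generating function of the family $\{Z_j(t-t^j)\}_{1\le j\le k}$ factorizes as $\prod_{j=1}^{k}G(s,t-t^j)$, with $G$ the function of Proposition~\ref{p1}. Applying the order-statistics description of the Poisson jump times given their number (Exercise~6.5.4 in \cite{PE}, extended to the inhomogeneous rate by the same time change already used) and summing over $k$ collapses the series to an exponential, the only change relative to Proposition~\ref{p1} being that each $\int_0^{\cdot}\hat{\lambda}_0(v)\,dv$ is replaced by $\int_0^{\cdot}\hat{\lambda}_0(v-\chi)\,dv$. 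This gives
\[
\E\!\left[s^{Z_0(t)}\mid\hat{\lambda}_0\right]
= s^{\mathds{1}_{\eta_0>t-\chi}}\,
\exp\!\left\{\int_0^t\big(G(s,t-u)-1\big)\,\hat{\lambda}_0(u-\chi)\,du\right\},
\]
and taking expectation over $(\hat{\lambda}_0,\eta_0)$, which has the law of $(\hat{\lambda},\eta)$, removes the conditioning and the subscript $0$.

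I would finish by setting $s=0$, using $0^{\mathds{1}_{\eta_0>t-\chi}}=\mathds{1}_{\eta_0\le t-\chi}$ and $F(t-u)=G(0,t-u)$ from Corollary~\ref{cor1}, to reach the announced integral equation for $F_\chi(t)=G(0,t)$. Because the subtrees born after time $0$ are fresh, the right-hand side naturally involves the unshifted $F$ of Corollary~\ref{cor1}, so no new fixed-point or uniqueness argument is required: the identity pins down $F_\chi$ explicitly once $F$ is known. The only genuinely delicate step, and the one I would check carefully, is the bookkeeping between infection age and calendar time for the ancestor, namely inserting the shift $-\chi$ correctly into both the intensity $\hat{\lambda}_0(\cdot-\chi)$ and the survival indicator $\mathds{1}_{\eta_0>t-\chi}$ while recognizing that the descendants, born at nonnegative times, are governed by the unshifted $G$ and $F$; everything else is a line-by-line transcription of Proposition~\ref{p1} and Corollary~\ref{cor1}.
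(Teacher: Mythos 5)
Your argument is correct, and it is precisely the ``easy adaptation'' of Proposition \ref{p1} that the paper invokes without writing out: the same decomposition of $Z_0(t)$, the same conditioning on $\hat{\lambda}_0$ and on $\{Q_0(t)=k\}$, the same order-statistics computation for the jump times of the inhomogeneous Poisson process, the only new ingredient being the age shift $-\chi$ for the ancestor. Note, however, that what you actually prove is not the printed statement, and on both points of disagreement your version is the right one. First, the bookkeeping: the ancestor's infection age at time $t$ is $t-\chi$, so its death indicator is $\mathds{1}_{\eta\le t-\chi}$; the printed $\mathds{1}_{\eta\le t+\chi}$ is inconsistent both with the shift $\hat{\lambda}(u-\chi)$ appearing in the same display and with Subsection 3.3, where $\tilde{\eta}=\eta(1-u)=\eta+\chi$, so that $\{\tilde{\eta}\le t\}=\{\eta\le t-\chi\}$ (sanity check: if $\hat{\lambda}\equiv0$, the extinction time is $\eta+\chi$, so $F_\chi(t)=\P(\eta\le t-\chi)$, not $\P(\eta\le t+\chi)$).

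Second, and more substantively, the integrand must involve the unshifted $F(t-u)-1$ of Corollary \ref{cor1}, as in your derivation, and not $F_\chi(t-u)-1$: the direct descendants are born at positive times with infection age zero, so their subtrees are i.i.d.\ copies of $Z$ and contribute the fresh $G(s,t-t^j)$. The printed self-referential equation is in fact not satisfied by the true distribution function: take $\eta\equiv a$ deterministic, $\hat{\lambda}(u)=\lambda\mathds{1}_{u\le a}$ and $\chi=-a/2$; for $t\in[a/2,a)$ extinction by time $t$ means exactly that the ancestor had no post-$0$ children, so $F_\chi(t)=e^{-\lambda a/2}$ there, which solves your equation (since $F$ vanishes on $[0,a)$) but not the equation with $F_\chi$ inside, whose right-hand side strictly exceeds $e^{-\lambda a/2}$ for $t>a/2$. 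Your closing remark is also correct and worth keeping: because only the fresh $F$ enters the exponential, the identity determines $F_\chi$ explicitly once $F$ is characterized by Corollary \ref{cor1} and Proposition \ref{unicite}, so no new fixed-point or uniqueness argument is required. The same correction should propagate to \eqref{ee}, where $\tilde{F}(t-r)$ inside the exponential should read $F(t-r)$, with $F$ the solution of \eqref{ef1}.
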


\subsection{Epidemic with multiple infected at the initial time.}
In the first two subsections, we have considered an epidemic that starts with a single infected individual. In this subsection, we consider an epidemic that starts with $M\in \N$ infected individuals at the initial time. The goal is to determine the distribution of the extinction time. To this end,  let $(\hat{\lambda}_i, \eta_i)_{1\le i\le M}$ be a sequence of pairs of random variables where $\hat{\lambda}_i$ (resp. $\eta_i$) denote the infectivity (resp. the lifetime) of the ancestor $i$. Let $(u_i)_{1\le i\le M}$ be a sequence of independent and identically distributed (i.i.d) random variables with law $\mathcal{U}([0,1])$. Note that the sequence $\left(\hat{\lambda}_i,\eta_i, u_i\right)_{1\le i \le M}$ is i.i.d  and for each $i$, we assume that $(\hat{\lambda}_i, \eta_i)$ and $u_i$ are independent. We assume that the individual $i$ was infected at time $\chi_i=-u_i\eta_i$, which we believe is the most natural model. 

From Proposition \ref{p1p}, we know that the distribution of the extinction time of the epidemic starting with the ancestor $i$, is given by
\begin{align}\label{ee}
\tilde{F}(t) = \E\left[ \mathds{1}_{\tilde{\eta} \leq t}\exp\left\{\int_{0}^{t}\Big(\tilde{F}(t-r)-1\Big)\tilde{\lambda}(r) dr\right\}\right],
\end{align}
with $\tilde{\eta}_i = \eta_i(1-u_i)$ and $\tilde{\lambda}_i(t) = \hat{\lambda}_i(t-\chi_i)$. Since, as the dynamics of reproduction remains the same for all infected individuals resulting from each ancestor, hence from the branching property, we deduce the main result of this section.
\begin{theorem}\label{th_et}
The distribution function of the time we have to wait in order to see the extinction of the epidemic, if at time $t_0$ we have $M$ infected individuals, is well approximated by the following:
\begin{align*}\label{e}
H(t) =\left(\tilde{F}(t)\right)^M.
\end{align*}
\end{theorem}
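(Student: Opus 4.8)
The plan is to derive $H(t)$ directly from the branching property, which decouples the $M$ ancestral lineages. The key observation is that the whole epidemic goes extinct by time $t$ precisely when every one of the $M$ sub-epidemics, each initiated by one of the $M$ ancestors present at time $t_0$, has gone extinct by time $t$. First I would set up notation: for $1\le i\le M$, let $T^i_{ext}$ denote the extinction time of the sub-population consisting of ancestor $i$ together with all of its descendants, and let $T_{ext}=\max_{1\le i\le M} T^i_{ext}$ be the extinction time of the whole epidemic. The equality $\{T_{ext}\le t\}=\bigcap_{i=1}^M\{T^i_{ext}\le t\}$ is immediate, since no individual anywhere in the population is alive at time $t$ if and only if none of the $M$ lineages has a surviving member at time $t$.

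Second I would invoke the branching/independence structure. By construction of the Crump--Mode--Jagers process, distinct ancestors reproduce independently, and each ancestor together with its progeny evolves independently of the others. Hence the events $\{T^i_{ext}\le t\}$, $1\le i\le M$, are mutually independent, and
\begin{align*}
H(t)=\P(T_{ext}\le t)=\P\Big(\bigcap_{i=1}^M\{T^i_{ext}\le t\}\Big)=\prod_{i=1}^M\P\big(T^i_{ext}\le t\big).
\end{align*}
Third, I would identify each factor with $\tilde F(t)$. Here I would use the modeling assumption that the $M$ ancestors were each infected at time $\chi_i=-u_i\eta_i$, with $(\hat\lambda_i,\eta_i,u_i)_{1\le i\le M}$ i.i.d. By Proposition \ref{p1p}, the distribution function of the extinction time of the sub-epidemic launched by ancestor $i$ born at $\chi_i$ is $F_{\chi_i}$, and the substitution $\tilde\eta_i=\eta_i(1-u_i)$, $\tilde\lambda_i(r)=\hat\lambda_i(r-\chi_i)$ recasts this common law as the solution $\tilde F$ of equation \eqref{ee}. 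Since the triples are identically distributed, each factor equals $\tilde F(t)$, giving $H(t)=(\tilde F(t))^M$.

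The main obstacle is not the product formula itself but justifying that it is the correct quantity, i.e. the word ``approximated'' in the statement. The exact distribution of the real epidemic's extinction time is not $(\tilde F(t))^M$; the identity holds for the approximating branching process, and relies on two approximations made earlier: that for $t\ge t_0$ the susceptible proportion is frozen at $\bar S(t_0)$ (so that $\hat\lambda=\bar S(t_0)\lambda$ governs each lineage), and that with $M\ll N$ the $M$ lineages do not interact through a shared susceptible pool, so their independence is legitimate. I would therefore state the branching-property computation cleanly as an exact identity for the branching model and note that the approximation error relative to the genuine SIR dynamics is controlled by the regime $M\ll N$ together with the law-of-large-numbers limit \eqref{AB} from \cite{RPE}, rather than attempting a quantitative bound here.
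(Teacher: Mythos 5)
Your proposal is correct and follows essentially the same route as the paper, which deduces the theorem in one line from the branching property: the $M$ ancestral lineages evolve independently, each with extinction-time law $\tilde F$ given by Proposition \ref{p1p} and equation \eqref{ee} under the infection times $\chi_i=-u_i\eta_i$, so the overall extinction time, being the maximum of $M$ i.i.d.\ lineage extinction times, has distribution $(\tilde F(t))^M$. Your additional remarks on the word ``approximated'' (freezing $\bar S$ at $\bar S(t_0)$ and the non-interaction of lineages when $M\ll N$) correctly identify the approximations the paper makes implicitly in Section 2.2 and are a welcome clarification, not a departure from the paper's argument.
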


\section{Several examples of random function $\lambda(t)$}

Our varying infectivity model is in fact a SIR/SEIR, in the sense that it allows an exposed period just after infection, during which $\lambda(t)=0$. However, we do not introduce the $E$ compartment ($E$ for {\it Exposed}, the status of an infected individual who is, just after being infected, in a latent period, not yet infectious), the $I$ compartment  including all infected individuals, whether latent or infectious. In all most used models $\lambda(t)$ is piecewise constant, the jump times being random, following most classically an exponential distribution so that the stochastic model is Markovian and its law of large numbers limit is a system of ordinary differential equations (in contrast with the integral equation \eqref{AB}).

We now review two classical examples of piecewise constant $\lambda(t)$, which correspond respectively to the SIR and the SEIR model and finally present the example of varying infectivity $\lambda(t)$ which we shall use in the next section for our comparison with the more classical SIR ODE model.   

\subsection{The classical SIR model} 
The simplest commonly used example of the infectivity $\lambda(t)$ is  $\lambda(t) = \lambda\mathds{1}_{t \leq \eta}$, where $\lambda$ is a positive constant and $\eta$ is the random duration of the infectious period. In that case equation \eqref{ef1} take the form
\begin{equation*}\label{sirvi}
F(t) =\int_{0}^{t}\exp\Bigg\{\lambda\int_{0}^{r}\Big(F(t-u)-1\Big) du\Bigg\}\mathbb{P}_{\eta}(dr).
\end{equation*}
In the particular case of a deterministic $\eta$ ($i.e.$ $\P_\eta=\delta_a$, with $a\in \R_+$), we have 
\begin{align*}
F(t)
&=
\mathds{1}_{t \geq a}\exp\left\{\lambda\int_{0}^{a}\Big(F(t-u)-1\Big) du\right\} \quad \mbox{with} \quad F(0) = 0 \quad \mbox{and} \quad F(a) = \exp(-\lambda a).
\end{align*}
The most commonly used model corresponds to $\eta$ following an exponential distribution with parameter $\mu$. In this case, the system of  integral equations \eqref{AB} simplifies as follows :
 \begin{equation*}
 \left\{
    \begin{array}{ll}
  \frac{dS(t)}{dt}= -\lambda \overline{S}(t) I(t),   &\\\\ 
\frac{dI(t)}{dt}= \left(\lambda \overline{S}(t) - \mu \right) I(t), &\\\\ 
   \frac{dR(t)}{dt}=  \mu I(t).
&
           \end{array}
           \right.
\end{equation*}
If we linearize the second equation for $t\ge t_0$ by replacing $\overline{S}(t)$ by $\overline{S}(t_0)$, we obtain
\begin{equation*}
I(t)= I(t_0)\exp\left[ \left(\lambda  \overline{S}(t_0)- \mu\right)(t-t_0) \right]. 
\end{equation*}
From this, it is easy see that 
\begin{equation}\label{rhoODE}
\rho= \lambda  \overline{S}(t_0) - \mu.
\end{equation}
The fact that the above derivation is correct, although the deterministic model is
not valid for $t\ge t_0$, is explained in \cite{RPE}. Note also that solving equation \eqref{rhosolu} below gives the same result, as the reader can easily verify. 

Let us now compute $R_{eff}$. An infected individual has infectious contacts at rate $\lambda \overline{S}(t_0)$. This means that the expected number of infectious contacts equals
\begin{equation}\label{ReffODE}
R_{eff} =\lambda \overline{S}(t_0)\times \mathbb{E}[\eta] = \frac{\lambda  \overline{S}(t_0)}{\mu}.
\end{equation}
 The approximating branching process is the continuous time Markov branching process $(X(t))_{t\ge 0}$ which describes the number of descendants alive at time $t$ of a unique ancestor born at time zero. Every individual in this population, independently of the others, lives for an exponential time with parameter $\mu$, and during its lifetime it gives births at rate $\lambda \overline{S}(t_0)$. His descendants reproduce according to the same procedure. We consider the subcritical case $\mu>\lambda \overline{S}(t_0)$. Let  $G(s,t) = \mathbb{E}\left(s^{X(t)} \right)$, $|s|\le1$, be the probability generating function of $X(t)$. On page 109 of Athreya and Ney \cite{AtNe}, or in formula (5) of Iwasa, Nowak, and Michor \cite{IW}, we find the explicit form :
\begin{equation*}
G(s,t)=\frac{ \mu(s-1)-e^{-\rho t}(\lambda \overline{S}(t_0) s-\mu) }{\lambda\overline{S}(t_0)(s-1) -e^{-\rho t}(\lambda \overline{S}(t_0) s- \mu)}.
\end{equation*}
where $\rho$ was defined in \eqref{rhoODE}. Let us define $T_{ext} = \inf\{t > 0 : X(t) = 0\}$. We notice that $F(t)=G(0,t) = \mathbb{P}(X_t = 0)=\mathbb{P}(T_{ext} \leq t)$ is the distribution function of the extinction time. From the expression for $G(s,t)$, we deduce the value of $F(t)$. 
\begin{proposition}\label{prodep}
When starting with a single ancestor at time $0$, the distribution function of the extinction time is given as :
\begin{align*}
F(t)=\frac{1-e^{\rho t}}{1 - R_{eff} \times e^{\rho t} },
\end{align*}
where $R_{eff}$ was defined in \eqref{ReffODE}.
\end{proposition}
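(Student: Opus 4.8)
The plan is to obtain $F$ by simply evaluating the explicit generating function at $s=0$, since by definition $F(t)=G(0,t)=\P(X(t)=0)=\P(T_{ext}\le t)$, and then to rewrite the resulting rational expression in terms of $R_{eff}$ and $\rho$. No new probabilistic input is needed beyond the closed form for $G(s,t)$ quoted from Athreya and Ney \cite{AtNe}; the work is entirely algebraic bookkeeping.

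First I would substitute $s=0$. The numerator $\mu(s-1)-e^{-\rho t}(\lambda\overline{S}(t_0)s-\mu)$ collapses to $-\mu+\mu e^{-\rho t}=-\mu(1-e^{-\rho t})$, and the denominator $\lambda\overline{S}(t_0)(s-1)-e^{-\rho t}(\lambda\overline{S}(t_0)s-\mu)$ collapses to $-\lambda\overline{S}(t_0)+\mu e^{-\rho t}$, so that
\[
F(t)=\frac{\mu\big(1-e^{-\rho t}\big)}{\lambda\overline{S}(t_0)-\mu e^{-\rho t}}.
\]
Next I would multiply numerator and denominator by $e^{\rho t}$ to eliminate the negative exponentials, obtaining $\mu(e^{\rho t}-1)$ over $\lambda\overline{S}(t_0)e^{\rho t}-\mu$, then divide both by $\mu$ and insert the definition $R_{eff}=\lambda\overline{S}(t_0)/\mu$ from \eqref{ReffODE} to reach $(e^{\rho t}-1)/(R_{eff}e^{\rho t}-1)$. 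A final multiplication of top and bottom by $-1$ produces exactly the stated formula $(1-e^{\rho t})/(1-R_{eff}e^{\rho t})$. Observe that only the definition of $R_{eff}$ is actually invoked; the rate $\rho$ from \eqref{rhoODE} is merely carried along symbolically throughout.

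There is no genuine obstacle in this argument: the only points requiring care are the sign bookkeeping across the two simplifications and the decision to clear $e^{-\rho t}$ rather than leave it in place. As a consistency check I would verify the two boundary behaviours against the formula. At $t=0$ one gets $F(0)=(1-1)/(1-R_{eff})=0$, reflecting that the single ancestor is alive at time $0$; and in the subcritical regime $\mu>\lambda\overline{S}(t_0)$, equivalently $\rho<0$ so that $e^{\rho t}\to0$, one gets $F(t)\to1$ as $t\to\infty$, confirming almost sure extinction. Both limits agree with the probabilistic interpretation and thereby validate the algebra.
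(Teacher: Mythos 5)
Your proposal is correct and follows exactly the route the paper intends: the paper's proof of Proposition \ref{prodep} is precisely the evaluation $F(t)=G(0,t)$ of the Athreya--Ney/Iwasa et al. closed form at $s=0$, followed by the same algebraic simplification using $R_{eff}=\lambda\overline{S}(t_0)/\mu$ from \eqref{ReffODE}. Your sign bookkeeping and the two boundary checks ($F(0)=0$ and $F(t)\to1$ as $t\to\infty$ in the subcritical case $\rho<0$) are all accurate.
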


\subsection{The classical SEIR model} 
In this model, upon infection an individual is first exposed (compartment $E$), during a period $\xi$, during which the individual is not infectious, then he becomes infectious and stays so for a duration $\eta$, during which he infects susceptibles at rate $\lambda$, and then finally recovers. In that case $\lambda(t)$ is $\lambda(t)=\lambda {\bf1}_{\xi \le t<\xi+\eta}$ and equation \eqref{ef1} takes the form
\begin{align*}
F(t) &= \int_{0}^{t}\int_{0}^{t-r}\exp\Bigg\{\lambda\int_{s}^{s+r}\Big(F(t-u)-1\Big)du\Bigg\}\P_{(\xi,\eta)}(ds,dr).
\end{align*}
When $\xi$ and $\eta$ are deterministic, that is to say $\P_{(\xi,\eta)}(ds,dr) = \delta_{a}(ds) \delta_{b}(dr)$, with $(a,b)\in \R_+^2$, we have
\begin{align*}
F(t) =\mathds{1}_{t \geq a+b}\exp\Bigg\{\lambda\int_{a}^{a+b}\Big(F(t-u)-1\Big) du\Bigg\}, \quad \mbox{with} \quad F(u) = 0, \quad \mbox{for all} \ u \in [0,a]. 
\end{align*}
 In case $\xi$ and $\eta$ are independent and follow exponential distributions with parameters resp. $\gamma$ and $\mu$, the deterministic model obeys the ODE  
 \begin{equation*}
 \left\{
    \begin{array}{ll}
  \frac{d\overline{S}(t)}{dt}= -\lambda \overline{S}(t) \overline{I}(t),   &\\\\ 
  \frac{d\overline{E}(t)}{dt}= \lambda \overline{S}(t) \overline{I}(t) - \gamma \overline{E}(t), &\\\\ 
\frac{d\overline{I}(t)}{dt}= \gamma  \overline{E}(t) - \mu \overline{I}(t), &\\\\ 
   \frac{d\overline{R}(t)}{dt}=  \mu  \overline{I}(t).
&
           \end{array}
           \right.
\end{equation*}
In this model, again $R_{eff} = \frac{\lambda  \overline{S}(t_0)}{\mu}$. Solving the equation \eqref{rhosolu} below for $\rho$, we find 
\begin{equation}\label{rhoseir}
\rho= \frac{1}{2} \left[ \sqrt{(\gamma-\mu)^2+4\gamma  \overline{S}(t_0)\lambda}-(\mu+\gamma) \right].
\end{equation}

\subsection{Our varying infectivity model}
We again  define $\hat{\lambda}(t)= \overline{S}(t_0) \lambda(t)$. The infectivity $\hat{\lambda}(t)$ is first zero (corresponding to the latency period) followed by a gradual increase for some days, and then $\hat{\lambda}(t)$ starts decreasing down towards $0$ which it hits when the individual has recovered (see Figure 1). 
\begin{figure}[H]
\centering
\includegraphics[width=4.0in]{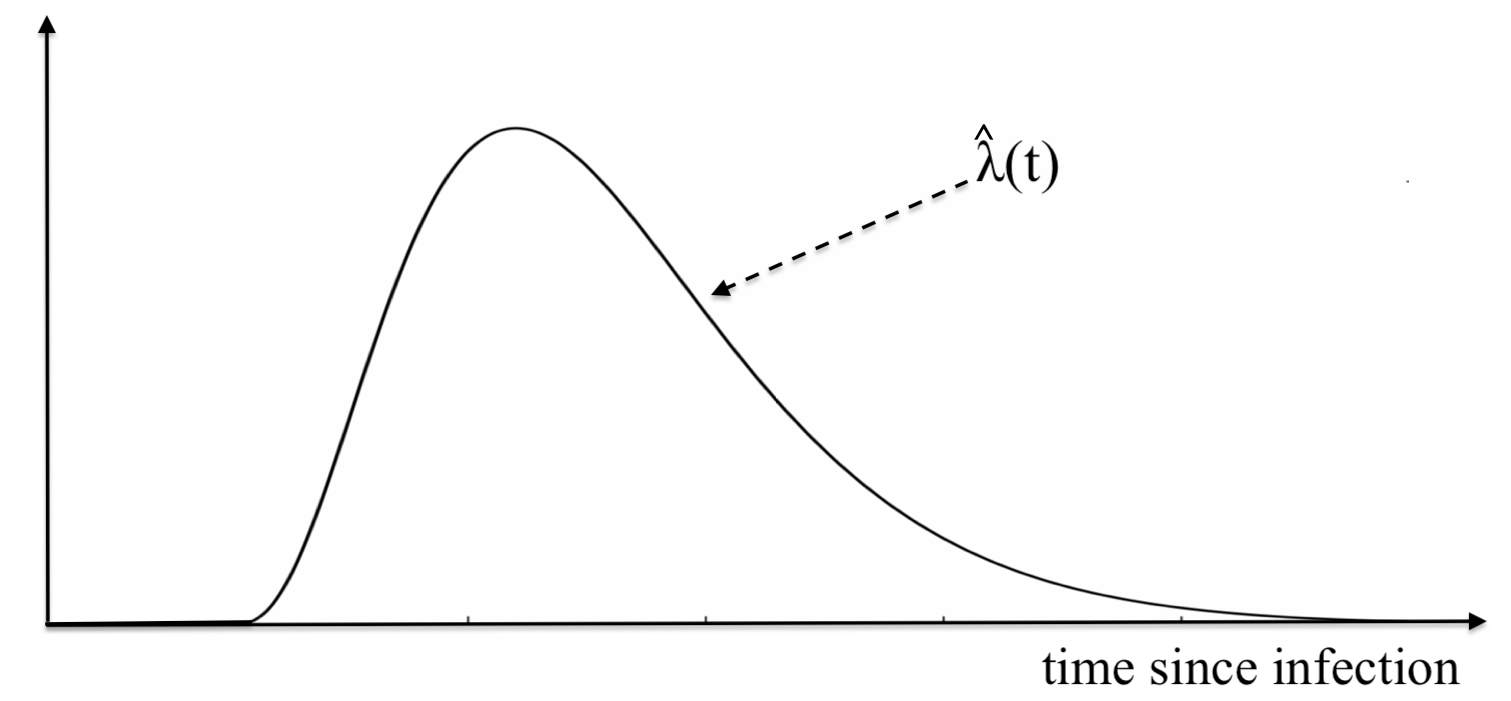}
\caption{Example of trajectory of $\hat{\lambda}(t)$.}
\end{figure}  
In the computations of section 5 below, we use a piecewise linear $\hat{\lambda}(t)$, which allows the function to depend upon a  small number of parameters, see Figure 2.
\begin{figure}[H]
\centering
\includegraphics[width=4.0in]{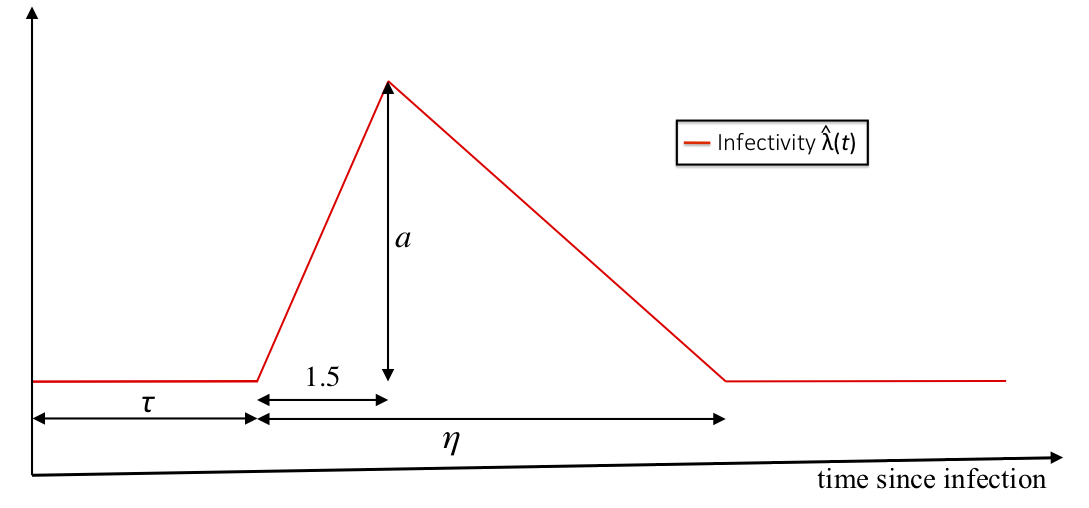}
\caption{trajectory of $\hat{\lambda}(t)$ used for the comparisons below.}
\end{figure}  
Here $\tau$ is the duration of the exposed period, $\eta$ that of the infectious period. We have arbitrarily fixed the length of the period of increase to $1.5$ days, and taken the maximum value $a$ to be a deterministic quantity at our disposal. In other word, in this case, we have  
\begin{equation}\label{lmdat}
\hat{\lambda}(t) =
\begin{cases}
0 & \text{if}~ t < \tau \\
\frac{a}{1.5}\left(t-\tau\right)  & \text{if}~ \tau \le t< \tau+1.5\\
a \frac{\tau+\eta-t}{\eta-1.5}  & \text{if}~ \tau+1.5 \le t < \tau+\eta\\
0  & \text{if}~ \tau+\eta < t 
\end{cases}
\end{equation}
Let $\mathcal{J}$ be the joint law of $\tau$ and $\eta$. From Corollary \ref{cor1}, we deduce that
\begin{align*}
F(t) &= \mathbb{E}\Bigg[ \mathds{1}_{\zeta \leq t}\exp\Bigg\{\frac{a}{1.5}\int_{\tau}^{\tau+1.5}(F(t-u)-1)(u-\tau)du
+\frac{a}{\eta-1.5}\int_{\tau+1.5}^{\tau+\eta }(F(t-u)-1)(\tau+\eta-u)du\Bigg\}\Bigg],
\end{align*}
with $\zeta = \tau+\eta$. Thus, we obtain
\begin{align*}
F(t) =& \int_{0}^{t}\int_{0}^{t} \mathds{1}_{s+r \leq t}\exp\Bigg\{ \frac{a}{1.5}\int_{s }^{s+1.5}(F(t-u)-1)(u-s)du\\
&+\frac{a}{r-1.5}\int_{s+1.5}^{s+r}(F(t-u)-1)(s+r-u)du\Bigg\}\mathcal{J}(ds,dr).
\end{align*}
The effective reproduction number is defined by
\begin{equation}\label{Reff=}
R_{eff}= \mathbb{E}\left[\int_{0}^{\infty}\hat{\lambda}(t)dt\right]
\end{equation}
and the rate of decrease $\rho$ of the number of infected individuals is the unique solution of 
\begin{equation}\label{rhosolu}
\mathbb{E}\left[\displaystyle\int_{0}^{\infty}e^{-\rho t}\hat{\lambda}(t)dt \right] =1,
\end{equation}
see Theorem 2.3 in \cite{RPE}.

\section{Comparison between our Varying infectivity model and an ODE SIR model}
 In this section, we compare the distribution function of the extinction time in our varying infectivity model, with that of an ODE SIR model with the same $R_{eff}$, which is the effective reproduction number at time $t_0$, and the same rate of decrease $\rho$ of the number of infected individuals. 

In the following, we assume that the random variables $\tau$ and $\eta$ defined in \eqref{lmdat} are independent, $\tau \sim \mathcal{U}\left(1.5, 2.5\right)$ and $\eta \sim \mathcal{U}\left(7,13\right)$.
\subsection{Approximation of the distribution function of the extinction time in the varying infectivity model}
Since it is not possible to obtain an explicit solution of \eqref{ef1}, then we will use the approximation made in section \ref{Appendix}. In other words, we will consider the following approximate function (whose convergence is established in section \ref{Appendix} below):
\begin{align*}
F_n\left(\frac{k}{n}\right) &= \mathbb{E}\left[\mathds{1}_{\tau+\eta \leq \frac{k}{n}}\exp\left\{\sum_{\ell=1}^{k}\left(F_n\left(\frac{k-\ell}{n}\right)-1\right)\int_{\frac{\ell-1}{n}}^{\frac{\ell}{n}}\hat{\lambda}(u)du\right\}\right].
\end{align*}
Let us define $\xi_{n,\ell} = \displaystyle\int_{\frac{\ell-1}{n}}^{\frac{\ell}{n}}\hat{\lambda}(u)du$. It is easy to see that $\xi_{n,\ell} \approx \frac{\hat{\lambda}\left(\frac{\ell}{n}\right)}{n}$. Combining this with \eqref{lmdat}, we deduce that 
\begin{align*}
\xi_{n,\ell} \approx  \frac{a}{1.5}\left(\frac{\ell}{n}-\tau\right)\mathds{1}_{\tau \le \frac{\ell}{n} < \tau+1.5} + a \left( \frac{\tau+\eta-\frac{\ell}{n}}{\eta-1.5} \right)\mathds{1}_{\tau+1.5 \le \frac{\ell}{n} < \tau+\eta}.
\end{align*}
Now, using the fact that the random variables $\tau$ and $\eta$ are independent, $\tau \sim \mathcal{U}\left(1.5;2.5\right)$ and $\eta \sim \mathcal{U}\left(7;13\right)$, we deduce that 
\begin{align}\label{fnkapp}
F_n\left(\frac{k}{n}\right)&\approx  \frac{1}{6}\int_{1.5}^{2.5}\int_{7}^{13}\mathds{1}_{x+y \leq \frac{k}{n}}\exp\left\{
\sum_{\ell=1}^{k}\left(F_n\left(\frac{k-\ell}{n}\right)-1\right)\frac{a}{1.5}\left(\frac{\ell}{n}-x\right)\mathds{1}_{x \le \frac{\ell}{n} < x+1.5} \right\} \nonumber \\
&\quad\times \exp\left\{\sum_{\ell=1}^{k}\left(F_n\left(\frac{k-\ell}{n}\right)-1\right)a \frac{x+y-\frac{\ell}{n}}{y-1.5}\mathds{1}_{x+1.5 \le \frac{\ell}{n} < x+y} \right\}dxdy \nonumber
\\
&\approx  \frac{1}{6}\frac{1}{n^2}\sum_{j=7n}^{13n}\sum_{i=1.5n}^{2.5n}\mathds{1}_{i+j \leq k}\exp\left\{
\sum_{\ell=i}^{i+1.5n}\left(F_n\left(\frac{k-\ell}{n}\right)-1\right)\left(\ell-i\right)\frac{a}{1.5n^2}\right\} \nonumber \\
&\quad\times \exp\left\{\sum_{\ell=i+1.5n}^{i+j}\left(F_n\left(\frac{k-\ell}{n}\right)-1\right) \frac{i+j-\ell}{j-1.5n}\frac{a}{n} \right\}.
\end{align}
\subsection{Computation of $R_{eff}$}
Recall \eqref{Reff=}. We first compute the random quantity $\displaystyle\int_{0}^{\infty}\hat{\lambda}(t)dt$. This is the surface below the curve $\hat{\lambda}(t)$, $i.e.$ the surface of the union of two triangles, and $\displaystyle\int_{0}^{\infty}\hat{\lambda}(t)dt = \frac{a\eta}{2}$.

Therefore, we have
\begin{align*}
R_{eff} = \frac{a}{2}\mathbb{E}[\eta] =\frac{a}{2}\times 10 = 5a.
\end{align*}

\subsection{Resolution of equation \eqref{rhosolu}}

From (\ref{lmdat}), we have
\begin{equation*}
\mathbb{E}\left[\int_{0}^{\infty}e^{-\rho t}\hat{\lambda}(t)dt \right]= a\left(A_\rho+B_\rho\right), \quad \mbox{with}
\end{equation*}
\begin{equation*}
A_\rho= \E\left(\int_{\tau}^{\tau+1.5}e^{-\rho t}\frac{t-\tau}{1.5}dt \right) \quad \mbox{and} \quad B_\rho= \E\left(\int_{\tau+1.5}^{\tau+\eta}e^{-\rho t}\frac{\tau+\eta-t}{\eta-1.5}dt \right).
\end{equation*}
Using the fact that $\tau$ and $\eta$ are independent, $\tau \sim \mathcal{U}\left(1.5;2.5\right)$, $\eta \sim \mathcal{U}\left(7;13\right)$, it is easy to check that
\begin{equation*}
A_\rho = \frac{1}{\rho}\left(e^{-1.5\rho}-e^{-2.5\rho}\right)\left[\frac{1}{1.5\rho^2}-e^{-1.5\rho}\left(\frac{1}{\rho}+\frac{1}{1.5\rho^2}\right)\right], \quad \mbox{and}
\end{equation*}
\begin{equation*}
B_\rho=\frac{1}{\rho}\left(e^{-1.5\rho}-e^{-2.5\rho}\right)\left\{ e^{-1.5\rho}\left(\frac{1}{\rho}-\frac{1}{6\rho^2}\log\left(\frac{11.5}{5.5}\right)\right)+\frac{1}{\rho^2}\mathbb{E}\left[\frac{e^{-\rho\eta}}{(\eta-1.5)}\right] \right\}.
\end{equation*}
Note that the mapping $\rho\mapsto\mathbb{E}\int_{0}^{\infty}e^{-\rho t}\hat{\lambda}(t)dt$ is decreasing. Consequently, it is easy to compute an approximate solution of equation \eqref{rhosolu}.
\subsection{Comparison of the distributions and the expectations of the extinction time between our Varying infectivity model and a ODE SIR model}\label{section54}
In what follows, we compare the extinction time in our Varying infectivity model and in the ODE SIR model with the same $R_{eff}$ and $\rho$. Note that we compare $F$'s and not $H$'s (see the notations in section \ref{sec3}). We compare the distribution of the extinction time of our Varying infectivity model given in \eqref{fnkapp} and of the extinction time of the ODE SIR model given in Proposition \ref{prodep}.
\begin{figure}[H]
	\centering
	\includegraphics[width=7.5cm]{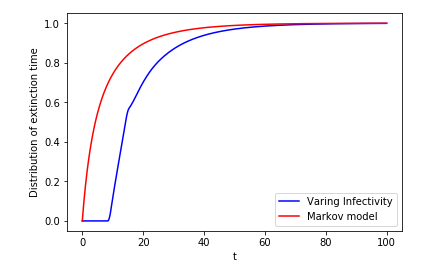}
	\caption{Comparison of models with the same $R_{eff} = 0.66$ and  $\rho =  -0.0683$.}
	\end{figure}
\begin{figure}[H]
	\centering
	\includegraphics[width=7.5cm]{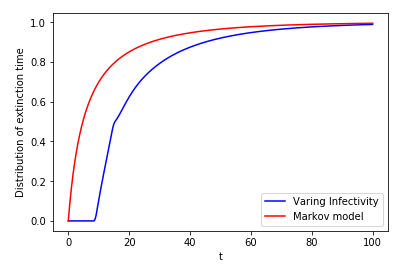}
	\caption{Comparison of models with the same  $R_{eff} = 0.8$ and  $\rho = -0.03816$.}
	\label{}
\end{figure}

We also compare the expectations of the extinction times of our varying infectivity model and of an ODE SIR model. To this end, recall that, the extinction time can be rewrite in the form $T_{ext}= \inf\{t-t_0 \ : \ I(t-t_0)=0 \}$. Thus, for the ODE SIR model, we obtain  
\begin{align*}
\mathbb{E}[T_{ext}] = \int_{0}^{\infty}\mathbb{P}(T_{ext}>t)dt = \int_{0}^{\infty}\left(1-F(t)\right)dt =  \frac{(1-R_{eff})}{\rho R_{eff}}\ln(1-R_{eff}),
\end{align*}
where we have used the formula of Proposition \ref{prodep} for $F(t)$.

For the varying infectivity model, we obtain 
\begin{align*}
\mathbb{E}[T_{ext}] = \int_{0}^{\infty}\mathbb{P}(T_{ext}>t)dt = \int_{0}^{\infty}\left(1-F_n(t)\right)dt \approx \frac{1}{n}\sum_{k = 1}^{n\Lambda}\left(1-F_n\left(\frac{k}{n}\right)\right).
\end{align*}
where $\Lambda$ is the point where we stop the calculation of the integral of $1-F_n(t)$. 

 \[\begin{tabular}{|*{11}{c|}}
 \hline
     &  &    \\
  & $R_{eff}= 0.66$ &  $R_{eff}= 0.8$   \\
     &  &   \\
      \hline
        &  &    \\
   & $\rho = -0.0683$ & $\rho = -0.03816$   \\
     &  &   \\
    \hline
        &  &   \\
  \mbox{ Varying infectivity model }  & $\mathbb{E}[T_{ext}] \approx 18.7854$ & $\mathbb{E}[T_{ext}] \approx 22.6568$   \\
     &  &   \\
    \hline
      &  &    \\
   \mbox{ODE SIR model}  & $\mathbb{E}[T_{ext}] =8.1369$ & $\mathbb{E}[T_{ext}] =10.544$ \\
       &  &  \\
  \hline
\end{tabular}
\]
\section{Conclusion :} Our comparison shows that, in the final phase of the epidemic, the varying infectivity SIR model (in fact its branching process approximation) tends to take more time to extinct then the branching process approximation of the ODE SIR model. This is not too surprising, since the varying infectivity model has a memory, contrary to the ODE SIR model. This fact is easily seen when there is a sudden change of the propagation of the epidemic like the lockdown that several countries have established during recent Covid epidemic. The authors who use an ODE model change gradually the infection rate, starting with the lockdown, while in reality the change of the infection rate was very sudden. This is a way to compensate the lack of memory of ODE models. We believe that the fact the varying infectivity SIR model takes more time than the ODE SIR model to forget its past explains that it takes more time to go extinct. The varying infectivity SIR model is more complex that the more classical ODE SIR model, and this probably explains why most authors who quote the seminal $1927$ paper of Kermack and McKendrick \cite{KMK} refer only to the very particular case of constant coefficients, studied in section 3.2 of that paper. Of course, it is very tempting and sometime preferable to use simple models, which allow to draw more conclusions. However, it is crucial to understand which biais the simple model introduce, compared to more realistic models. In this paper, we have identified one of those biaises, namely the shortening of the final phase of the epidemic.  In future work, we intend to do similar computations with 
various classes of varying infectivity models, in order to confirm these first conclusions.

\section{Appendix : Approximation of the distribution function of the extinction time}\label{Appendix} 
	In this subsection, we define a sequence of functions $\{F_n, n \geq 1\} $ which will allow us to approach the solution of equation \eqref{ef1}. To this end, for each $k \in \mathbb{Z}_+$, we set 
	\begin{equation}\label{Fnk}
	F_n\left(\frac{k}{n}\right) = \mathbb{E}\Bigg[\mathds{1}_{\eta \leq \frac{k}{n}}\exp\Bigg\{\sum_{\ell=1}^{k}\left(F_n\left(\frac{k-\ell}{n}\right)-1\right)\int_{\frac{\ell-1}{n}}^{\frac{\ell}{n}}\lambda(u)du\Bigg\}\Bigg]
	\end{equation}
	and for each $t \in [\frac{k}{n}, \frac{k+1}{n})$, 
	\begin{equation}\label{Fnt}
	F_n(t) = \mathbb{E}\Bigg[\mathds{1}_{\eta \leq t}\exp\Bigg\{\sum_{\ell=1}^{k-1}\left(F_n\left(\frac{k-\ell}{n}\right)-1\right)\int_{\frac{\ell-1}{n}}^{\frac{\ell}{n}}\lambda(u)du-\int_{\frac{k-1}{n}}^{t}\lambda(u)du\Bigg\}\Bigg].
	\end{equation}
	The goal of this section is to prove that as $n\longrightarrow +\infty$, $\{F_n(t), t>0\} \longrightarrow \{F(t), t>0\}$ in $D([0, +\infty))$, where $F$ is the unique solution of \eqref{ef1}. 
	
	We first check that 
	\begin{lemma}\label{lem1}
		For any $k \in \mathbb{Z}_+$, we have 
		\begin{equation*}
		F_n\left(\frac{k}{n} \right) \le F\left(\frac{k}{n}\right) \le 1.
		\end{equation*}
	\end{lemma}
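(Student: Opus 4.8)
The plan is to prove both inequalities by a single induction on $k$, relying on exactly two structural facts: that $F$ is the distribution function of $T_{ext}$ (hence nondecreasing and $[0,1]$-valued, by Proposition \ref{unicite}) and that $\lambda \ge 0$. The upper bound $F(k/n) \le 1$ is then immediate, so the whole task reduces to establishing $F_n(k/n) \le F(k/n)$.

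For the base case $k=0$, the sum in \eqref{Fnk} and the integral in \eqref{ef1} are both empty, so $F_n(0) = \E[\mathds{1}_{\eta \le 0}] = F(0)$ and the inequality holds with equality. For the inductive step I would assume $F_n(j/n) \le F(j/n)$ for every $0 \le j \le k-1$ and compare, realization by realization (i.e. for each fixed trajectory of $\lambda$ and value of $\eta$), the exponent $\sum_{\ell=1}^{k}(F_n((k-\ell)/n)-1)\int_{(\ell-1)/n}^{\ell/n}\lambda(u)\,du$ appearing in \eqref{Fnk} with the exponent $\int_0^{k/n}(F(k/n-u)-1)\lambda(u)\,du$ appearing in \eqref{ef1}.

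The comparison I have in mind runs in two stages. First, since each weight $\int_{(\ell-1)/n}^{\ell/n}\lambda(u)\,du$ is nonnegative and the induction hypothesis gives $F_n((k-\ell)/n)-1 \le F((k-\ell)/n)-1$ for $1 \le \ell \le k$, the $F_n$-exponent is bounded above by $\sum_{\ell=1}^{k}(F((k-\ell)/n)-1)\int_{(\ell-1)/n}^{\ell/n}\lambda(u)\,du$. Second, I would pass from this step-function sum to the genuine integral: on each window $u \in [(\ell-1)/n, \ell/n)$ one has $k/n - u \ge (k-\ell)/n$, so monotonicity of $F$ gives $F((k-\ell)/n) \le F(k/n-u)$; multiplying $(F((k-\ell)/n)-1) \le (F(k/n-u)-1)$ by $\lambda(u) \ge 0$ and summing the integrals over the windows shows this sum is $\le \int_0^{k/n}(F(k/n-u)-1)\lambda(u)\,du$. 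Chaining the two stages gives, pointwise in the realization, that the $F_n$-exponent does not exceed the $F$-exponent. Since $x \mapsto e^x$ is increasing and the common prefactor $\mathds{1}_{\eta \le k/n}$ is nonnegative, taking expectations yields $F_n(k/n) \le F(k/n)$ and closes the induction.

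The only delicate point is the second stage, getting the sum-versus-integral inequality in the right direction. It works precisely because \eqref{Fnk} evaluates $F_n$ at the left endpoint $(k-\ell)/n$ of each age window, which, after translating to the variable $k/n - u$ and using that $F$ is nondecreasing, is the smallest relevant value of $F$; this makes the discrete exponent a lower approximation of the continuous one and is exactly what forces $F_n \le F$ rather than the reverse inequality. I would therefore be careful to state the monotonicity of $F$ and the sign of $\lambda$ explicitly, as they are the load-bearing hypotheses of the argument.
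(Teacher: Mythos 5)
Your proof is correct and follows essentially the same route as the paper: induction on $k$, with the two key inequalities being the induction hypothesis $F_n((k-\ell)/n)\le F((k-\ell)/n)$ weighted by the nonnegative quantities $\int_{(\ell-1)/n}^{\ell/n}\lambda(u)\,du$, and the monotonicity of $F$ on each window (since $k/n-u\ge (k-\ell)/n$ there), combined via monotonicity of the exponential before taking expectations. The only difference is cosmetic: you chain the inequalities upward from the $F_n$-exponent to the $F$-exponent, whereas the paper descends from $F(k/n)$ to $F_n(k/n)$, applying the same two bounds in the opposite order.
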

	\begin{proof} 
		Let $k \in \mathbb{Z}_+$. We first note that $F(t)\le 1$ (since $F$ is a distribution function). To prove the next assertion, we will proceed by recurrence on $k$. It is clear that $F_n(0)=0$. Let us now suppose that $F_n \left(\frac{\ell}{n}\right) \leq F\left(\frac{\ell}{n}\right)$, $\forall 1\le \ell \le k-1$. Now let us show that $F_n \left(\frac{k}{n}\right) \leq F\left(\frac{k}{n}\right)$. We have
		\begin{align*}
		F\left(\frac{k}{n}\right) &= 
		\mathbb{E}\left[\mathds{1}_{\eta \leq \frac{k}{n}}\exp\left\{\int_{0}^{\frac{k}{n}}\left(F\left(\frac{k}{n}-u\right)-1\right)\lambda(u)du\right\}\right]\\
		&\geq
		\mathbb{E}\left[\mathds{1}_{\eta \leq \frac{k}{n}}\exp\left\{\sum_{\ell=1}^{k}\int_{\frac{\ell-1}{n}}^{\frac{\ell}{n}}\left(F\left(\frac{k-\ell}{n}\right)-1\right)\lambda(u)du\right\}\right]\\
		&\geq
		\mathbb{E}\left[\mathds{1}_{\eta \leq \frac{k}{n}}\exp\left\{\sum_{\ell=1}^{k}\left(F_n\left(\frac{k-\ell}{n}\right)-1\right)\int_{\frac{\ell-1}{n}}^{\frac{\ell}{n}}\lambda(u)du\right\}\right]\\
		&=F_n\left(\frac{k}{n}\right),
		\end{align*}
		where we have used the fact that $F$ is non-decreasing and the recurrence assumption. 
	\end{proof} 
	
The previous extends to all $t$.
	\begin{lemma}\label{lem2}
		For any $t \ge 0$, we have 
		\begin{equation*}
		F_n\left(t \right) \le F\left(t\right) \le 1.
		\end{equation*}
	\end{lemma}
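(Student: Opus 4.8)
The plan is to push the grid-point comparison of Lemma \ref{lem1} out to an arbitrary $t$ by repeating the term-by-term estimate, this time against the interpolation formula \eqref{Fnt}. The bound $F(t)\le 1$ is immediate because $F$ is a distribution function, so the content is the inequality $F_n(t)\le F(t)$.

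First I would fix $t>0$, set $k=\lfloor nt\rfloor$ so that $t\in[\frac{k}{n},\frac{k+1}{n})$ and $F_n(t)$ is given by \eqref{Fnt}. Representing $F(t)$ through the exact equation \eqref{ef1}, I would cut the integral $\int_0^t(F(t-u)-1)\lambda(u)\,du$ along exactly the grid underlying \eqref{Fnt}: the full cells $[\frac{\ell-1}{n},\frac{\ell}{n}]$ for $\ell=1,\dots,k-1$, followed by the terminal piece $[\frac{k-1}{n},t]$.

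The core is a pointwise comparison on each cell. On a full cell $[\frac{\ell-1}{n},\frac{\ell}{n}]$ with $\ell\le k-1$ one has $t-u\ge \frac{k}{n}-\frac{\ell}{n}=\frac{k-\ell}{n}$, so monotonicity of $F$ yields $F(t-u)\ge F(\frac{k-\ell}{n})$, and Lemma \ref{lem1} then gives $F(\frac{k-\ell}{n})\ge F_n(\frac{k-\ell}{n})$. Since $\lambda\ge 0$, multiplying the resulting inequality $F(t-u)-1\ge F_n(\frac{k-\ell}{n})-1$ by $\lambda(u)$ and integrating produces $\int_{\frac{\ell-1}{n}}^{\frac{\ell}{n}}(F(t-u)-1)\lambda(u)\,du\ge (F_n(\frac{k-\ell}{n})-1)\int_{\frac{\ell-1}{n}}^{\frac{\ell}{n}}\lambda(u)\,du$, which is the $\ell$-th summand in the exponent of \eqref{Fnt}. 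On the terminal piece I would use only $F\ge 0$, i.e. $F(t-u)-1\ge -1$, to obtain $\int_{\frac{k-1}{n}}^{t}(F(t-u)-1)\lambda(u)\,du\ge -\int_{\frac{k-1}{n}}^{t}\lambda(u)\,du$, which is precisely the correction term in \eqref{Fnt}. Adding these bounds shows the exponent in the representation of $F(t)$ dominates that in \eqref{Fnt}; as $x\mapsto e^x$ is increasing and the factor $\mathds{1}_{\eta\le t}$ is common to both, taking expectations gives $F(t)\ge F_n(t)$.

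The only delicate point is the terminal cell $[\frac{k-1}{n},t]$, whose length lies in $[\frac1n,\frac2n)$ instead of being a single grid step; it is exactly because this piece is paired with $-\int_{\frac{k-1}{n}}^t\lambda(u)\,du$ in \eqref{Fnt} that the crude estimate $F(t-u)-1\ge -1$ is the correct one there. I do not anticipate a real obstacle: \eqref{Fnt} invokes $F_n$ only at the grid points $\frac{k-\ell}{n}$, $1\le\ell\le k-1$, so Lemma \ref{lem1} furnishes all the inequalities needed and no new induction on $k$ is required (the degenerate interval $[0,\frac1n)$ is handled by the same terminal estimate).
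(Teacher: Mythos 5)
Your proof is correct and takes essentially the same route as the paper: split $\int_0^t\big(F(t-u)-1\big)\lambda(u)\,du$ along the grid underlying \eqref{Fnt}, use monotonicity of $F$ plus Lemma \ref{lem1} on each full cell, use the crude bound $F(t-u)-1\ge -1$ on the terminal piece, and compare the exponents realization-wise before taking expectations. If anything, your index bookkeeping (sum over $\ell=1,\dots,k-1$ with terminal cell $[\tfrac{k-1}{n},t]$) matches \eqref{Fnt} more faithfully than the paper's own write-up, which mixes $[nt]$ and $\lceil nt\rceil$ in its displayed inequalities.
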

	\begin{proof} 
		We first note that
		\begin{equation*}
		\int_{0}^{t}\left(1-F(t-u)\right)\lambda(u) du \leq \sum_{\ell=1}^{[ nt]}\int_{\frac{\ell-1}{n}}^{\frac{\ell}{n}}\left(1-F(t-u)\right)\lambda(u)du+\int_{\frac{[nt]}{n}}^{t}\lambda(u)du.
		\end{equation*}
		From the fact that $F$ is non-decreasing and $\frac{[nt]}{n} \leq t$, we deduce that
		\begin{align*}
		\int_{0}^{t}\left(1-F(t-u)\right)du &\leq \sum_{\ell=1}^{\lceil nt\rceil}\int_{\frac{\ell-1}{n}}^{\frac{\ell}{n}}\left(1-F\left(\frac{[nt]-\ell}{n}\right)\right)\lambda(u)du+\int_{\frac{[nt]}{n}}^{t}\lambda(u)du\\
		\int_{0}^{t}\left(F(t-u)-1\right)du 
		&\geq \sum_{\ell=1}^{[nt]}\left(F_n\left(\frac{\lceil nt\rceil-\ell}{n}\right)-1\right)\int_{\frac{\ell-1}{n}}^{\frac{\ell}{n}}\lambda(u)du-\int_{\frac{[nt]}{n}}^{t}\lambda(u)du,
		\end{align*}
		where we have used Lemma \ref{lem1} for the last inequality. The desired result follows by combining the last inequality with \eqref{Fnt}.
	\end{proof}

	We have 
	\begin{proposition}\label{prdep}
		Let $T>0$. Then there exists a constant $C$ such that for all  $n\ge 1$ and $0 < s < t<T$, 
		\begin{equation*}
		-\frac{C}{n} -C(t-s) \le F_n(t)-F_n(s) \le C(t-s)+ \phi(t) -\phi(s)+ \frac{C}{n},
		\end{equation*}
		where $\phi(t)=\P(\eta \le t)$ the distribution function of $\eta$. 
	\end{proposition}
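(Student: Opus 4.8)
The plan is to reduce the whole statement to a pathwise Lipschitz-type bound on the random exponent appearing in \eqref{Fnt}. For $t\in[\tfrac{k}{n},\tfrac{k+1}{n})$ write
\[
F_n(t)=\E\big[\mathds{1}_{\eta\le t}\,e^{A_n(t)}\big],\qquad A_n(t):=\sum_{\ell=1}^{k-1}\Big(F_n(\tfrac{k-\ell}{n})-1\Big)\int_{\frac{\ell-1}{n}}^{\frac{\ell}{n}}\lambda(u)\,du-\int_{\frac{k-1}{n}}^{t}\lambda(u)\,du,
\]
and note that $A_n(t)\le 0$, since $F_n\le 1$ by Lemma \ref{lem2} and $\lambda\ge0$. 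For $0<s<t$ I would split
\[
F_n(t)-F_n(s)=\E\big[(\mathds{1}_{\eta\le t}-\mathds{1}_{\eta\le s})e^{A_n(t)}\big]+\E\big[\mathds{1}_{\eta\le s}\big(e^{A_n(t)}-e^{A_n(s)}\big)\big].
\]
Because $\mathds{1}_{\eta\le t}-\mathds{1}_{\eta\le s}=\mathds{1}_{s<\eta\le t}\ge0$ and $0<e^{A_n(t)}\le1$, the first term lies in $[0,\phi(t)-\phi(s)]$; this is exactly the source of the asymmetry between the two sides of the claimed inequality. Using $|e^{-x}-e^{-y}|\le|x-y|$ for $x,y\ge0$ (as in the proof of Proposition \ref{unicite}), the second term is bounded in absolute value by $\E|A_n(t)-A_n(s)|$. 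Hence everything reduces to the pathwise estimate $|A_n(t)-A_n(s)|\le C(t-s)+C/n$.

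The core of the argument is a uniform in $n$ bound on the total variation of the grid values $g(k):=F_n(\tfrac{k}{n})$. With $\delta(k):=|g(k+1)-g(k)|$ and $\Psi_n(\tfrac{k}{n}):=\sum_{p=1}^k(g(k-p)-1)\int_{\frac{p-1}{n}}^{\frac{p}{n}}\lambda$ (which equals $A_n(\tfrac{k}{n})$ since $g(0)=0$), the convolution structure gives
\[
\Psi_n(\tfrac{k+1}{n})-\Psi_n(\tfrac{k}{n})=\sum_{p=1}^{k}\big(g(k+1-p)-g(k-p)\big)\int_{\frac{p-1}{n}}^{\frac{p}{n}}\lambda-\int_{\frac{k}{n}}^{\frac{k+1}{n}}\lambda .
\]
Applying the single-step version of the decomposition above to $g(k+1)-g(k)$ and using Assumption (H), I obtain the recursion
\[
\delta(k)\le\beta(k)+\frac{\lambda^*}{n}\sum_{i=0}^{k-1}\delta(i)+\frac{\lambda^*}{n},\qquad \beta(k):=\phi(\tfrac{k+1}{n})-\phi(\tfrac{k}{n}).
\]
Setting $S(k)=\sum_{i<k}\delta(i)$ this reads $S(k+1)\le(1+\tfrac{\lambda^*}{n})S(k)+\beta(k)+\tfrac{\lambda^*}{n}$, and discrete Gronwall together with $\sum_k\beta(k)\le1$ and $k\le nT$ yields $S(k)\le e^{\lambda^*T}(1+\lambda^*T)=:C_0$ for all $k\le nT$, uniformly in $n$. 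I expect this discrete Gronwall / uniform total-variation step to be the main obstacle, since it is where one must exploit the self-referential convolution structure and verify that the constant stays bounded as $n\to\infty$.

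Finally, for general $0<s<t<T$ with $s\in[\tfrac{j}{n},\tfrac{j+1}{n})$, $t\in[\tfrac{k}{n},\tfrac{k+1}{n})$, $j\le k$, I would write $A_n(t)-A_n(s)=[A_n(t)-\Psi_n(\tfrac{k}{n})]+[\Psi_n(\tfrac{k}{n})-\Psi_n(\tfrac{j}{n})]+[\Psi_n(\tfrac{j}{n})-A_n(s)]$. The two boundary terms are each a single integral of $\lambda$ over an interval of length $<1/n$, hence $\le\lambda^*/n$. For the middle term the same convolution computation gives
\[
\Psi_n(\tfrac{k}{n})-\Psi_n(\tfrac{j}{n})=\sum_{p=1}^{j}\big(g(k-p)-g(j-p)\big)\int_{\frac{p-1}{n}}^{\frac{p}{n}}\lambda+\sum_{p=j+1}^{k}\big(g(k-p)-1\big)\int_{\frac{p-1}{n}}^{\frac{p}{n}}\lambda ,
\]
whose tail sum is bounded by $\lambda^*(k-j)/n$. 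For the first sum I would use $|g(k-p)-g(j-p)|\le\sum_{m=j-p}^{k-p-1}\delta(m)$ and swap the order of summation: each index $m$ is counted at most $k-j$ times, so this sum is at most $\tfrac{\lambda^*}{n}(k-j)\sum_m\delta(m)\le\tfrac{\lambda^*}{n}(k-j)C_0$. Collecting the pieces and using $(k-j)/n\le(t-s)+1/n$ yields $|A_n(t)-A_n(s)|\le C(t-s)+C/n$, which combined with the first paragraph gives the two-sided bound of the statement. The first interval $k=0$ is the trivial sub-case $A_n(t)=-\int_0^t\lambda$, and the case $j=k$ follows at once since then $A_n(t)-A_n(s)=-\int_s^t\lambda$.
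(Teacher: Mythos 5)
Your proposal is correct, and it shares the skeleton of the paper's argument --- the split of $F_n(t)-F_n(s)$ into an indicator term (nonnegative, bounded by $\phi(t)-\phi(s)$, the sole source of the asymmetry) plus an exponential term controlled via $|e^{-x}-e^{-y}|\le |x-y|$ for $x,y\ge 0$, boundary corrections of size $\lambda^*/n$, and a discrete Gronwall induction --- but you organize the two key estimates differently. The paper (Lemmas \ref{lemA1A2} and \ref{abnkC}) runs the Gronwall induction on the negative parts $a_n(k)$ alone, which works because the lower one-step bound $A_1(n,k)$ contains no jump-of-$\phi$ term, and then recovers $\sum_\ell b_n(\ell)$ by telescoping; from this it extracts the two-sided one-step bound $-\frac{C}{n}\le F_n\big(\frac{k+1}{n}\big)-F_n\big(\frac{k}{n}\big)\le \frac{C}{n}+\P\big(\frac{k}{n}<\eta\le\frac{k+1}{n}\big)$ (Lemma \ref{lemlk}), sums it over the grid, and patches the endpoints with Lemmas \ref{touslamda} and \ref{lemtlks}. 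You instead run a single recursion on the full absolute increments $\delta(k)$, with the $\phi$-increments $\beta(k)$ as source terms (harmless since $\sum_k\beta(k)\le 1$), and then estimate the aggregated exponent difference $\Psi_n\big(\frac{k}{n}\big)-\Psi_n\big(\frac{j}{n}\big)$ in one shot, via the order-of-summation swap in which each $\delta(m)$ is counted at most $k-j$ times. Both mechanisms correctly keep $\phi$ out of the lower bound: the paper by tracking signed parts per step, you by arranging that only the symmetric pathwise exponent estimate contributes to the lower bound. Your route is more economical (it compresses Lemmas \ref{lemA1A2}--\ref{lemtlks} into two steps) and makes the origin of the asymmetry transparent; one small remark is that the counting argument, though correct, is avoidable: once you have $S(k)\le C_0$, your own recursion already yields the per-step bound $-\frac{C}{n}\le g(k+1)-g(k)\le \beta(k)+\frac{C}{n}$, and summing these, as the paper does, gives the middle grid-to-grid term directly. (Also, the bound $F_n\le 1$ you invoke for $A_n(t)\le 0$ is, at grid points, the statement of Lemma \ref{lem1}; citing Lemma \ref{lem2} works as well, and neither creates circularity since both precede the proposition.)
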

	
For the proof of this proposition, we will need some several technical lemmas. In order to simplify the notations below we let 
	\begin{equation}\label{akbk}
	a_n(k)= \left[F_n\left(\frac{k+1}{n}\right)-F_n\left(\frac{k}{n}\right)\right]^{-} \quad \mbox{and} \quad  b_n(k)= \left[F_n\left(\frac{k+1}{n}\right)-F_n\left(\frac{k}{n}\right)\right]^{+}.
	\end{equation}
	Let us define, $\forall n \geq 1$, $k\in \mathbb{Z}_+$, 
	\begin{equation}\label{Lamdank}
	\Lambda_n(k) = \sum_{\ell=1}^{k}\bigg(F_n\Big(\frac{k-\ell}{n}\Big)-1\bigg)\int_{\frac{\ell-1}{n}}^{\frac{\ell}{n}}\lambda(u)du\le 0, 
	\end{equation}
	(see Lemma \ref{lem1}) and let us rewrite \eqref{Fnk} in the form 
	\begin{equation}\label{Fnk1}
	F_n\Big(\frac{k}{n}\Big)= \mathbb{E}\left[\mathds{1}_{\eta \leq \frac{k}{n}}\exp(\Lambda_n(k))\right].
	\end{equation}
	We will need the following lemmas.
	\begin{lemma}\label{lemA1A2}
		For any $n\ge 1$, $k \in \mathbb{Z}_+$, we have 
		\begin{equation*}
		A_1(n,k) \le F_n\Big(\frac{k+1}{n}\Big) -F_n\Big(\frac{k}{n}\Big) \le  A_2(n,k),
		\end{equation*}
		with
		\begin{equation}\label{A1nk}
		A_1(n,k)= \exp\Bigg\{-\frac{\lambda^*}{n}\left[\sum_{\ell=0}^{k-1}a_n(\ell) +1\right]\Bigg\}-1
		\end{equation}
		and 
		\begin{equation}\label{A2nk}
		A_2(n,k)= \exp\Bigg\{\frac{\lambda^*}{n}\sum_{\ell=0}^{k-1} b_n(\ell) \Bigg\}-1 
		+ \mathbb{P}\Big(\frac{k}{n} < \eta \leq \frac{k+1}{n}\Big). 
		\end{equation}
	\end{lemma}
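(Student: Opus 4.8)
The plan is to bound the increment of the exponent $\Lambda_n(k)$ first, pathwise in the realization of $\lambda$, and then transfer these bounds to the increment of $F_n$ itself through the monotonicity of the exponential and the bound $e^{\Lambda_n(k)}\le 1$ coming from $\Lambda_n(k)\le 0$ (see \eqref{Lamdank}). First I would align the two sums defining $\Lambda_n(k+1)$ and $\Lambda_n(k)$. Peeling off from $\Lambda_n(k+1)$ the contribution of the new interval $[k/n,(k+1)/n]$, on which the relevant value of $F_n$ is $F_n(0)=0$, and reindexing by $m=k-\ell$, I expect to obtain the identity
\[
\Lambda_n\big(k+1\big)-\Lambda_n\big(k\big)=\sum_{m=0}^{k-1}\Big(F_n\big(\tfrac{m+1}{n}\big)-F_n\big(\tfrac{m}{n}\big)\Big)\int_{(k-m-1)/n}^{(k-m)/n}\lambda(u)\,du-\int_{k/n}^{(k+1)/n}\lambda(u)\,du.
\]

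Next I would recognize $F_n(\tfrac{m+1}{n})-F_n(\tfrac{m}{n})=b_n(m)-a_n(m)$ from \eqref{akbk}, and use that each integral lies in $[0,\lambda^*/n]$ by Assumption (H) and nonnegativity of $\lambda$. Dropping the terms of unfavourable sign (the $-a_n(m)$ and the last integral for the upper bound, the $b_n(m)$ for the lower bound) then yields the two pathwise, almost sure, bounds
\[
-\frac{\lambda^*}{n}\Big[\sum_{m=0}^{k-1}a_n(m)+1\Big]\le \Lambda_n\big(k+1\big)-\Lambda_n\big(k\big)\le \frac{\lambda^*}{n}\sum_{m=0}^{k-1}b_n(m),
\]
whose two sides are precisely the exponents appearing in $A_1(n,k)$ and $A_2(n,k)$ in \eqref{A1nk}--\eqref{A2nk}.

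I would then split the increment of $F_n$, using \eqref{Fnk1} and $\mathds{1}_{\eta\le(k+1)/n}=\mathds{1}_{\eta\le k/n}+\mathds{1}_{k/n<\eta\le(k+1)/n}$, into
\[
F_n\big(\tfrac{k+1}{n}\big)-F_n\big(\tfrac{k}{n}\big)=\E\Big[\mathds{1}_{\eta\le k/n}\big(e^{\Lambda_n(k+1)}-e^{\Lambda_n(k)}\big)\Big]+\E\Big[\mathds{1}_{k/n<\eta\le(k+1)/n}\,e^{\Lambda_n(k+1)}\Big].
\]
Since $\Lambda_n(k+1)\le 0$, the second expectation lies in $[0,\P(k/n<\eta\le(k+1)/n)]$: for the lower bound I simply discard it, and for the upper bound I keep it as the extra probability term of $A_2(n,k)$. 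For the first expectation I would write $e^{\Lambda_n(k+1)}-e^{\Lambda_n(k)}=e^{\Lambda_n(k)}\big(e^{\Lambda_n(k+1)-\Lambda_n(k)}-1\big)$ and invoke the two pathwise bounds above together with $e^{\Lambda_n(k)}\le 1$ and $\mathds{1}_{\eta\le k/n}\le 1$. When the bracket is nonnegative (upper estimate) both factors only shrink it toward $A_2(n,k)$; when it is nonpositive (lower estimate) both factors only raise it toward $A_1(n,k)$.

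The step I expect to be most delicate is the sign bookkeeping. This is twofold: first, the reindexing must produce the clean telescoping differences $b_n(m)-a_n(m)$ without an off-by-one error, with the isolated interval $[k/n,(k+1)/n]$ accounting for the extra $-1$ inside $A_1(n,k)$; second, one must track the direction of every inequality when the factor $e^{\Lambda_n(k)}\le 1$ multiplies a bracket whose sign is indefinite but controlled, and when the indicator $\mathds{1}_{\eta\le k/n}\le 1$ is absorbed. Once the signs are handled correctly, the remainder is a direct application of Assumption (H) through the pathwise bound $0\le\int_{(\ell-1)/n}^{\ell/n}\lambda(u)\,du\le\lambda^*/n$.
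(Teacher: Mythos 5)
Your proposal is correct and follows essentially the same route as the paper: the same identity for $\Lambda_n(k+1)-\Lambda_n(k)$ (using $F_n(0)=0$ to isolate the term giving the extra $-1$ in $A_1$), the same splitting of the increment of $F_n$ via $\mathds{1}_{\eta\le (k+1)/n}=\mathds{1}_{\eta\le k/n}+\mathds{1}_{k/n<\eta\le (k+1)/n}$ and the factorization $e^{\Lambda_n(k)}\bigl(e^{\Lambda_n(k+1)-\Lambda_n(k)}-1\bigr)$ with $e^{\Lambda_n(k)}\le 1$, and the same use of Assumption (H) through $0\le\int_{(\ell-1)/n}^{\ell/n}\lambda(u)\,du\le\lambda^*/n$. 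The only cosmetic difference is that you bound the increment of $\Lambda_n$ two-sidedly in one stroke, whereas the paper handles its positive and negative parts $(\Lambda_n(k+1)-\Lambda_n(k))^{\pm}$ separately; your sign discussion in the transfer step is complete once one notes that the complementary sign cases are trivial because $A_1(n,k)\le 0$ while the exponential term in $A_2(n,k)$ is nonnegative.
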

	\begin{proof} 
		Recalling \eqref{Lamdank} and \eqref{Fnk1}, we first note that 
		\begin{align*}
		\Lambda_n(k+1)-\Lambda_n(k) = \sum_{\ell=1}^{k}\Big(F_n\Big(\frac{k+1-\ell}{n}\Big)-F_n\Big(\frac{k-\ell}{n}\Big)\Big)\int_{\frac{\ell-1}{n}}^{\frac{\ell}{n}}\lambda(u)du-\int_{\frac{k}{n}}^{\frac{k+1}{n}}\lambda(u)du.
		\end{align*}
		It follows that
		\begin{align*}
		-\Big(\Lambda_n(k+1)-\Lambda_n(k)\Big)^{-} \geq -\sum_{\ell=1}^{k}\Big(F_n\Big(\frac{k+1-\ell}{n}\Big)-F_n\Big(\frac{k-\ell}{n}\Big)\Big)^{-}\int_{\frac{\ell-1}{n}}^{\frac{\ell}{n}}\lambda(u)du-\int_{\frac{k}{n}}^{\frac{k+1}{n}}\lambda(u)du
		\end{align*}
		and 
		\begin{align*}
		\Big(\Lambda_n(k+1)-\Lambda_n(k)\Big)^{+} \leq \sum_{\ell=1}^{k}\Big(F_n\Big(\frac{k+1-\ell}{n}\Big)-F_n\Big(\frac{k-\ell}{n}\Big)\Big)^{+}\int_{\frac{\ell-1}{n}}^{\frac{\ell}{n}}\lambda(u)du.
		\end{align*}
		Thus, we have
		\begin{align*}
		F_n\Big(\frac{k+1}{n}\Big)-F_n\Big(\frac{k}{n}\Big) &= \mathbb{E}\left[\mathds{1}_{\eta \leq \frac{k+1}{n}}\exp(\Lambda_n(k+1))
		-\mathds{1}_{\eta \leq \frac{k}{n}}\exp(\Lambda_n(k))\right]\\
		&= \mathbb{E}\left[\left(\mathds{1}_{\eta \leq \frac{k+1}{n}}-\mathds{1}_{\eta \leq \frac{k}{n}}\right)\exp(\Lambda_n(k+1))
		+\mathds{1}_{\eta \leq \frac{k}{n}}\left(\exp(\Lambda_n(k+1))-\exp(\Lambda_n(k))\right)\right]\\
		&\leq \mathbb{E}\left[\left(\mathds{1}_{\eta \leq \frac{k+1}{n}}-\mathds{1}_{\eta \leq \frac{k}{n}}\right)
		+\mathds{1}_{\eta \leq \frac{k}{n}}\exp(\Lambda_n(k))\left(\exp(\Lambda_n(k+1)-\Lambda_n(k))-1\right)\right]\\
		&\leq \P\left(\frac{k}{n} < \eta \leq \frac{k+1}{n} \right)+ \E\left(\exp\left[{\left(\Lambda_n(k+1)-  \Lambda_n(k)\right)^+}\right] -1\right)\\
		&\leq \mathbb{E}\left[\exp\Bigg\{\sum_{\ell=1}^{k}\Big(F_n\Big(\frac{k+1-\ell}{n}\Big)-F_n\Big(\frac{k-\ell}{n}\Big)\Big)^+\int_{\frac{\ell-1}{n}}^{\frac{\ell}{n}}\lambda(u)du\Bigg\}\right]-1\\
		& \quad +\mathbb{P}\Big(\frac{k}{n} < \eta \leq \frac{k+1}{n}\Big)\\
		&\leq A_2(n,k),
		\end{align*}
		where we have used \eqref{akbk} and \eqref{A2nk} in the last inequality. We also have 
		\begin{align*}
		F_n\Big(\frac{k+1}{n}\Big)-F_n\Big(\frac{k}{n}\Big) &= \mathbb{E}\Big[\mathds{1}_{\eta \leq \frac{k+1}{n}}\exp(\Lambda_n(k+1))-\mathds{1}_{\eta \leq \frac{k}{n}}\exp(\Lambda_n(k))\Big]\\
		&\geq  \mathbb{E}\Big[\mathds{1}_{\eta \leq \frac{k}{n}}\exp(\Lambda_{n}(k))\Big(\exp(\Lambda_n(k+1)-\Lambda_n(k))-1\Big)\Big]\\
		&\geq  \mathbb{E}\Big[\mathds{1}_{\eta \leq \frac{k}{n}}\exp(\Lambda_{n}(k))\Big(\exp(-(\Lambda_n(k+1)-\Lambda_n(k))^{-})-1\Big)\Big]\\
		& \geq \mathbb{E}\Big[\exp(-(\Lambda_n(k+1)-\Lambda_n(k))^{-})-1\Big].
		\end{align*}
		Combining the above arguments with \eqref{akbk} and \eqref{A1nk}, we deduce that
		\begin{align*}
		F_n\Big(\frac{k+1}{n}\Big)-F_n\Big(\frac{k}{n}\Big) \geq A_1(n,k).
		\end{align*}
	\end{proof}
	
	Recall \eqref{akbk}. We have 
	\begin{lemma}\label{abnkC}
		Let $T>0$. Then there exists a constant $C$ such that for all $n\ge1$ and $0\le \frac{k}{n}< T$,
		\begin{equation*}
		\sum_{\ell=0}^{k} a_n(\ell) \le C \quad \mbox{and} \quad \sum_{\ell=0}^{k} b_n(\ell) \le C.
		\end{equation*}
	\end{lemma}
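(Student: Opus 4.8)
The plan is to control $\sum_{\ell=0}^{k} a_n(\ell)$ first, by a discrete Gronwall argument fed by the \emph{lower} bound $A_1(n,k)$ of Lemma \ref{lemA1A2}, and then to obtain the bound on $\sum_{\ell=0}^{k} b_n(\ell)$ almost for free from the elementary identity $x^+-x^-=x$. The reason for this asymmetry is that $A_1(n,k)$ has the form $e^{-x}-1$ with $x\ge0$, which linearizes through $1-e^{-x}\le x$ and therefore closes into a \emph{linear} recursion, whereas $A_2(n,k)$ has the form $e^{x}-1$, which does not linearize and cannot by itself be turned into a usable Gronwall inequality.

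First I would convert the lower bound into a recursive inequality for $a_n(k)$. Since $A_1(n,k)\le0$ while $a_n(k)=[F_n(\frac{k+1}{n})-F_n(\frac{k}{n})]^{-}\ge0$, the estimate $F_n(\frac{k+1}{n})-F_n(\frac{k}{n})\ge A_1(n,k)$ yields $a_n(k)\le -A_1(n,k)$ in both cases (trivially when the increment is nonnegative, and directly when it is negative). Using $-A_1(n,k)=1-\exp\{-\frac{\lambda^*}{n}[\sum_{\ell=0}^{k-1}a_n(\ell)+1]\}$ together with $1-e^{-x}\le x$ for $x\ge0$ gives
\[
a_n(k)\le \frac{\lambda^*}{n}\Big[\sum_{\ell=0}^{k-1}a_n(\ell)+1\Big].
\]
Setting $u_k=1+\sum_{\ell=0}^{k}a_n(\ell)$ (so $u_{-1}=1$), this rearranges to $u_k\le(1+\frac{\lambda^*}{n})\,u_{k-1}$, hence by induction $u_k\le(1+\frac{\lambda^*}{n})^{k+1}$. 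For $\frac{k}{n}<T$ one has $(1+\frac{\lambda^*}{n})^{k+1}\le\exp(\lambda^*\frac{k+1}{n})\le\exp(\lambda^*(T+1))$ uniformly in $n\ge1$, and therefore $\sum_{\ell=0}^{k}a_n(\ell)\le\exp(\lambda^*(T+1))-1$, which is the desired uniform bound.

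Finally, for the $b_n$ sum I would use $b_n(\ell)-a_n(\ell)=[F_n(\frac{\ell+1}{n})-F_n(\frac{\ell}{n})]^{+}-[F_n(\frac{\ell+1}{n})-F_n(\frac{\ell}{n})]^{-}=F_n(\frac{\ell+1}{n})-F_n(\frac{\ell}{n})$, so that the sum telescopes:
\[
\sum_{\ell=0}^{k}b_n(\ell)-\sum_{\ell=0}^{k}a_n(\ell)=F_n\Big(\tfrac{k+1}{n}\Big)-F_n(0)=F_n\Big(\tfrac{k+1}{n}\Big)\le1,
\]
where I use $F_n(0)=0$ and $F_n\le1$ from Lemma \ref{lem2}. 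Combining with the previous step gives $\sum_{\ell=0}^{k}b_n(\ell)\le\exp(\lambda^*(T+1))$, so one may take $C=\exp(\lambda^*(T+1))$ for both inequalities. I expect the main obstacle to be conceptual rather than computational: one must resist bounding $\sum_{\ell=0}^{k}b_n(\ell)$ directly (which fails, since $A_2$ produces a super-linear recursion), and instead control the negative increments first, the positive increments being then pinned down by the a priori bound $0\le F_n\le1$ through the telescoping identity.
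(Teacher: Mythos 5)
Your proof is correct and follows essentially the same route as the paper's: the paper likewise deduces $a_n(k)\le -A_1(n,k)$ from Lemma \ref{lemA1A2}, linearizes via $1-e^{-x}\le x$ to get $a_n(k)\le \frac{\lambda^*}{n}\big[\sum_{\ell=0}^{k-1}a_n(\ell)+1\big]$, and closes this by induction (proving the pointwise bound $a_n(k)\le r(1+r)^{k-1}$ with $r=\lambda^*/n$, which is exactly your multiplicative Gronwall recursion $u_k\le(1+r)u_{k-1}$ unrolled), then controls $\sum_\ell b_n(\ell)$ by the same telescoping identity, writing $\sum_{\ell=0}^{k}b_n(\ell)=F_n\big(\frac{k+1}{n}\big)+\sum_{\ell=1}^{k}a_n(\ell)\le 1+\sum_{\ell}a_n(\ell)$ using $F_n\le 1$. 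The only differences are cosmetic (your packaging through $u_k=1+\sum_{\ell\le k}a_n(\ell)$, and a constant $e^{\lambda^*(T+1)}$ versus the paper's $e^{\lambda^* T}$), so the substance is identical.
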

	\begin{proof} 
		Let us show the first assertion. For this, we first prove that
		\begin{equation*}
		a_n(k)  \leq r\left(1+r\right)^{k-1},  \quad \text{with}\quad r = \frac{\lambda^*}{n}. 
		\end{equation*}
		According to Lemma \ref{lemA1A2}, we have
		\begin{align*}
		a_n(k)\le -A_1(n,k)&= 1-\exp\Bigg\{-r\left[\sum_{\ell=0}^{k-1}a_n(\ell) +1\right]\Bigg\}\\
		&\le r\left(\sum_{\ell=0}^{k-1}a_n(\ell) +1\right).
		\end{align*}
		However, it easy to see that $a_n(0)=0$ and $a_n(1)\le r$. Let us suppose $a_n(\ell)  \leq r\left(1+r\right)^{\ell-1}$, $\forall 1\le \ell \le k-1$. Thus, it is easy to see that, 
		\begin{align*}
		a_n(k) &\leq r \left(1+r+r(1+r)+...+r(r+1)^{k-2}\right)\\
		&= r\left(1+r\sum_{i=1}^{k-1}(1+r)^{i-1}\right)= r(1+r)^{k-1}.
		\end{align*}
		Consequently, since $\frac{k}{n} \leq T$,
		\begin{equation*}
		\sum_{\ell=0}^{k} a_n(\ell)= \sum_{\ell=1}^{k} a_n(\ell) \le  \sum_{\ell=1}^{k} r(1+r)^{\ell-1} =  (1+r)^k-1\le e^{rk}\le e^{\lambda^*T}\le C_T.
		\end{equation*}
		We now show the seond assertion. We first have that $b_n(0)=F_n\left(\frac{1}{n}\right)$. Then we have 
		\begin{align*}
		\sum_{\ell=0}^{k} b_n(\ell)&=F_n\left(\frac{1}{n}\right) +\sum_{\ell=1}^{k} \left(F_n\Big(\frac{\ell+1}{n}\Big)-F_n\Big(\frac{\ell}{n}\Big) \right) +\sum_{\ell=1}^{k} a_n(\ell)
		\\ &= F_n\Big(\frac{k+1}{n}\Big) +\sum_{\ell=1}^{k} a_n(\ell).
		\\ &\le 1 +\sum_{\ell=1}^{k} a_n(\ell),
		\end{align*}
		where we have used Lemma \ref{lem1} in the last inequality.  The desired result follows by combining this with the first assertion.
	\end{proof} 
	
	We shall need the following
	\begin{lemma}\label{lemlk}
		Let $T>0$. Then there exists a constant $C$ such that for all  $n\ge 1$ and $0\le \frac{\ell}{n}< \frac{k}{n}< T$, 
		\begin{equation*}
		-C\left(\frac{k-\ell}{n}\right) \le F_n\left(\frac{k}{n} \right)-F_n\left(\frac{\ell}{n} \right) \le C\left(\frac{k-\ell}{n}\right) + \phi\left(\frac{k}{n} \right) -\phi\left(\frac{\ell}{n} \right),
		\end{equation*}
		where $\phi(t)=\P(\eta \le t)$ the distribution function of the random variable $\eta$. 
	\end{lemma}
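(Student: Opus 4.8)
The plan is to derive the two-sided estimate by telescoping the increment $F_n(k/n)-F_n(\ell/n)$ into single-step increments, applying the one-step bounds of Lemma \ref{lemA1A2} to each, and then exploiting the uniform summability furnished by Lemma \ref{abnkC} to keep all constants independent of $n$ and $k$. First I would write the exact identity
\[
F_n\Big(\frac{k}{n}\Big)-F_n\Big(\frac{\ell}{n}\Big)=\sum_{j=\ell}^{k-1}\Big[F_n\Big(\frac{j+1}{n}\Big)-F_n\Big(\frac{j}{n}\Big)\Big],
\]
and bound each summand above by $A_2(n,j)$ and below by $A_1(n,j)$.

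For the upper bound, the key observation is that for every $j$ with $j/n<T$, Lemma \ref{abnkC} gives $\sum_{i=0}^{j-1}b_n(i)\le C$, so the exponent $\frac{\lambda^*}{n}\sum_{i=0}^{j-1}b_n(i)$ lies in the fixed interval $[0,\lambda^*C]$ uniformly in $n$ and $j$. Linearizing via $e^x-1\le x\,e^{\lambda^*C}$ on that interval then gives $\exp\{\frac{\lambda^*}{n}\sum_{i=0}^{j-1}b_n(i)\}-1\le \frac{C'}{n}$ with $C'=\lambda^*C\,e^{\lambda^*C}$ independent of $j$ and $n$. Since the remaining term in $A_2(n,j)$ is exactly $\phi(\frac{j+1}{n})-\phi(\frac{j}{n})$, summing over $\ell\le j\le k-1$ makes the $\phi$-increments telescope and produces
\[
F_n\Big(\frac{k}{n}\Big)-F_n\Big(\frac{\ell}{n}\Big)\le C'\,\frac{k-\ell}{n}+\phi\Big(\frac{k}{n}\Big)-\phi\Big(\frac{\ell}{n}\Big).
\]

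For the lower bound I would proceed symmetrically: Lemma \ref{abnkC} bounds $\sum_{i=0}^{j-1}a_n(i)\le C$, so the exponent in $A_1(n,j)$ stays in $[-\lambda^*(C+1)/n,0]$, and the elementary inequality $e^{-x}-1\ge -x$ for $x\ge 0$ gives $A_1(n,j)\ge -\frac{\lambda^*(C+1)}{n}$ uniformly in $j$. Summing the $k-\ell$ terms yields the matching lower bound $-C''\,\frac{k-\ell}{n}$ with $C''=\lambda^*(C+1)$, and taking $C=\max(C',C'')$ completes the proof.

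The argument is essentially bookkeeping once the single-step estimates and the uniform summability are in hand, so there is no deep obstacle. The one point that genuinely matters is that the constant in Lemma \ref{abnkC} does not grow with $j$: it is precisely this uniformity (coming from the geometric recursion $a_n(k)\le r(1+r)^{k-1}$ established there) that keeps each of the $k-\ell\le nT$ one-step contributions of order $1/n$ with a common constant, so that their sum is genuinely $O((k-\ell)/n)$ rather than something degrading as $k$ grows. I would therefore be careful to invoke Lemma \ref{abnkC} with the uniform bound at every index $j\le k-1$, not merely at the endpoint.
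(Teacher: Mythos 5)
Your proof is correct and follows essentially the same route as the paper: the paper likewise bounds the one-step increments via $A_1(n,k)$ and $A_2(n,k)$ from Lemma \ref{lemA1A2}, controls the exponents uniformly with Lemma \ref{abnkC} using the elementary inequalities $1-e^{-x}\le x$ and $e^x-1\le xe^x$, and then telescopes $F_n\left(\frac{k}{n}\right)-F_n\left(\frac{\ell}{n}\right)=\sum_{j=\ell}^{k-1}\left(F_n\left(\frac{j+1}{n}\right)-F_n\left(\frac{j}{n}\right)\right)$ so that the probability terms collapse to $\phi\left(\frac{k}{n}\right)-\phi\left(\frac{\ell}{n}\right)$. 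The only difference is cosmetic (you telescope first and then bound each summand, while the paper first establishes the uniform one-step bound and then telescopes), and your emphasis on the uniformity in $j$ of the constant from Lemma \ref{abnkC} is exactly the right point of care.
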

	\begin{proof} 
		Recall \eqref{A1nk} and \eqref{A2nk}. We have 
		\begin{align*}
		-A_1(n,k)&= 1-\exp\Bigg\{-\frac{\lambda^*}{n}\left[\sum_{\ell=0}^{k-1}a_n(\ell) +1\right]\Bigg\}
		\\&\le \frac{\lambda^*}{n}\left[\sum_{\ell=0}^{k-1}a_n(\ell) +1\right]\\
		&\le \frac{C}{n},
		\end{align*}
		where we have used Lemma \ref{abnkC}. However, we have 
		\begin{align*}
		A_2(n,k)&= \exp\Bigg\{\frac{\lambda^*}{n}\sum_{\ell=0}^{k-1} b_n(\ell) \Bigg\}-1 
		+ \mathbb{P}\Big(\frac{k}{n} < \eta \leq \frac{k+1}{n}\Big)\\
		&\le C\frac{\lambda^*}{n}\exp\left\{C\frac{\lambda^*}{n}\right\}+ \mathbb{P}\Big(\frac{k}{n} < \eta \leq \frac{k+1}{n}\Big)\\
		&\le \frac{C}{n}+ \mathbb{P}\Big(\frac{k}{n} < \eta \leq \frac{k+1}{n}\Big),
		\end{align*}
		where we have used the fact that $e^x-1\le xe^x$, $\forall x\ge0$ and Lemma \ref{abnkC}. Now combining the above arguments with Lemma \ref{lemA1A2}, we deduce that 
		\begin{equation*}
		-\frac{C}{n} \le F_n\Big(\frac{k+1}{n}\Big) -F_n\Big(\frac{k}{n}\Big) \le  \frac{C}{n}+ \mathbb{P}\Big(\frac{k}{n} < \eta \leq \frac{k+1}{n}\Big).
		\end{equation*}
		However, we note that 
		\begin{equation*}
		F_n\left(\frac{k}{n} \right)-F_n\left(\frac{\ell}{n} \right) = \sum_{j=\ell}^{k-1}\left( F_n\Big(\frac{j+1}{n}\Big) -F_n\Big(\frac{j}{n}\Big)\right).
		\end{equation*}
		The desired result follows by combining this with the previous inequalities.
	\end{proof} 
	
	Let us define, $\forall n \geq 1, t > 0$, with $k = \lceil nt\rceil$,
	\begin{equation}\label{Lamdankt}
	\Lambda_n(t)=\sum_{\ell=1}^{k-1}\left(F_n\left(\frac{k-\ell}{n}\right)-1\right)\int_{\frac{\ell-1}{n}}^{\frac{\ell}{n}}\lambda(u)du-\int_{\frac{k-1}{n}}^{t}\lambda(u)du,
	\end{equation}
	(see Lemma \ref{lem1}) and let us rewrite \eqref{Fnt} in the form 
	\begin{equation}\label{Fnt1}
	F_n(t) = \mathbb{E}\Big[\mathds{1}_{\eta \leq t}\exp\left(  \Lambda_n(t)\right) \Big].
	\end{equation}
	We shall need the following
	\begin{lemma}\label{touslamda}
		Let $T>0$. Then there exists a constant $C$ such that for all  $n\ge 1$ and $0<\frac{\ell-1}{n}<s< \frac{\ell}{n}< \frac{k}{n}< t< \frac{k+1}{n}<T$, 
		\begin{equation*}
		\left( \Lambda_n(t)-  \Lambda_n(k) \right)^+=0, \quad  \left( \Lambda_n(t)-  \Lambda_n(k) \right)^- \le C\left(t-\frac{k}{n}\right),
		\end{equation*}
		\begin{equation*}
		\left( \Lambda_n(\ell) -\Lambda_n(s) \right)^+\le \frac{C}{n} \quad \mbox{and} \quad \left( \Lambda_n(\ell) -\Lambda_n(s) \right)^-\le \frac{C}{n}+ C\left(\frac{\ell}{n}-s\right),
		\end{equation*}
		where $ \Lambda_n(.)$ was defined in \eqref{Lamdank}.
	\end{lemma}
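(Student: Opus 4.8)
The plan is to carefully unwind the definitions of $\Lambda_n(t)$ and $\Lambda_n(k)$ from \eqref{Lamdankt} and \eqref{Lamdank}, compute their difference exactly, and then bound each piece using the estimates on the increments of $F_n$ already established in Lemma \ref{lemlk}, together with Assumption {(\bf H)} which gives $\lambda(u)\le\lambda^*$.

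First I would treat the pair involving $t$ and $k=\lceil nt\rceil$ where $\frac{k}{n}<t<\frac{k+1}{n}$. Writing out $\Lambda_n(t)-\Lambda_n(k)$, the summation structure over $\ell$ is identical (both run up to $k-1$ in \eqref{Lamdankt}, while \eqref{Lamdank} for the integer node $k$ sums up to $k$), so the difference should reduce to a boundary integral term of the form $\int_{k/n}^{t}\big(F_n(\cdots)-1\big)\lambda(u)\,du$ minus the leftover tail integral $\int_{(k-1)/n}^{t}\lambda(u)\,du$ appearing in $\Lambda_n(t)$. Since each factor $F_n(\cdots)-1\le 0$ (by Lemma \ref{lem1}) and $\lambda\le\lambda^*$, the positive part vanishes, giving $\left(\Lambda_n(t)-\Lambda_n(k)\right)^+=0$, while the negative part is controlled by $\lambda^*\!\int_{k/n}^{t}du=\lambda^*(t-\frac{k}{n})\le C(t-\frac{k}{n})$; I expect one also needs the uniform increment bound from Lemma \ref{lemlk} to absorb the shift between the running index $\frac{k-\ell}{n}$ and the corresponding index at the node, which contributes an $O(t-\frac{k}{n})$ term as well.

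Next I would handle the pair $\Lambda_n(\ell)-\Lambda_n(s)$ for $\frac{\ell-1}{n}<s<\frac{\ell}{n}$. Here the summation limits differ by one full index (the node $\ell$ carries one extra summand compared to $s$, whose ceiling is also $\ell$), so after subtraction one is left with a telescoping combination of increments $F_n(\frac{\ell-j}{n})-F_n(\frac{\ell-1-j}{n})$ weighted by integrals of $\lambda$, plus the two residual boundary integrals. Applying the one-step bounds $-\frac{C}{n}\le F_n(\frac{k+1}{n})-F_n(\frac{k}{n})\le\frac{C}{n}+\P(\frac{k}{n}<\eta\le\frac{k+1}{n})$ derived inside the proof of Lemma \ref{lemlk}, and using $\int_{(\ell-1)/n}^{\ell/n}\lambda(u)\,du\le\frac{\lambda^*}{n}$, the positive part is bounded by $\frac{C}{n}$ while the negative part picks up the extra $C(\frac{\ell}{n}-s)$ from the partial integral over $[s,\frac{\ell}{n}]$.

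\textbf{The main obstacle} will be the bookkeeping: correctly matching the summation indices in \eqref{Lamdankt} against those in \eqref{Lamdank} so that the telescoping is exact and no spurious terms survive, and keeping careful track of signs so that the claimed vanishing of certain positive parts genuinely follows from $F_n-1\le 0$ rather than being merely an estimate. The analytic inputs ($|\lambda|\le\lambda^*$, $1-e^{-x}\le x$, and the increment bounds) are routine once the combinatorics of the difference are laid out cleanly; I would therefore invest most of the effort in writing the exact expression for each difference before estimating.
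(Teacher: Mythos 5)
Your proposal is correct and follows essentially the same route as the paper's proof: with the convention $k=\lfloor nt\rfloor$ of \eqref{Fnt} (the $\lceil nt\rceil$ in \eqref{Lamdankt} is a typo), the difference $\Lambda_n(t)-\Lambda_n(k)$ collapses \emph{exactly} to $-\int_{k/n}^{t}\lambda(u)\,du$ because $F_n(0)=0$ cancels the extra $\ell=k$ summand — so your hedge about needing Lemma \ref{lemlk} to ``absorb a shift'' is unnecessary there (and would in fact be inconsistent with the exact vanishing of the positive part) — while your decomposition of $\Lambda_n(\ell)-\Lambda_n(s)$ into the telescoped increments $F_n\left(\frac{\ell-j}{n}\right)-F_n\left(\frac{\ell-1-j}{n}\right)$ plus boundary integrals is precisely the paper's computation. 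The only cosmetic difference is that the paper bounds the telescoping sum by invoking Lemma \ref{abnkC} directly through \eqref{akbk}, whereas you route through the one-step increment bounds from the proof of Lemma \ref{lemlk}; since those bounds are themselves consequences of Lemmas \ref{lemA1A2} and \ref{abnkC}, the two estimates are equivalent and involve no circularity.
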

	\begin{proof} 
		From \eqref{Lamdank} and \eqref{Lamdankt}, we have 
		\begin{equation*}
		-\lambda^*\left(t-\frac{k}{n}\right) \leq \Lambda_n(t)- \Lambda_n(k)= - \int_{\frac{k}{n}}^{t}\lambda(u)du \le 0. 
		\end{equation*}
		Thus, we obtain the first two assertions. In the same way, from \eqref{Lamdank} and \eqref{Lamdankt} we have
		\begin{align*}
		\Lambda_n(\ell)-  \Lambda_n(s)=& \sum_{j=1}^{\ell-2}\left(F_n\left(\frac{\ell-j}{n}\right)-F_n\left(\frac{\ell-1-j}{n}\right) \right)\int_{\frac{j-1}{n}}^{\frac{j}{n}}\lambda(u)du-\int_{\frac{\ell-1}{n}}^{\frac{\ell}{n}}\lambda(u)du \\
		&+\left(F_n\left(\frac{1}{n}\right)-1\right) \int_{\frac{\ell-2}{n}}^{\frac{\ell-1}{n}}\lambda(u)du+ \int_{\frac{\ell-2}{n}}^{s}\lambda(u)du\\
		=&\sum_{j=1}^{\ell-2}\left(F_n\left(\frac{\ell-j}{n}\right)-F_n\left(\frac{\ell-1-j}{n}\right) \right)\int_{\frac{j-1}{n}}^{\frac{j}{n}}\lambda(u)du+ F_n\left(\frac{1}{n}\right) \int_{\frac{\ell-2}{n}}^{\frac{\ell-1}{n}}\lambda(u)du
		\\&- \int_{s}^{\frac{\ell}{n}}\lambda(u)du. 
		\end{align*}
		Combining this with Lemmas \ref{lem1}, \ref{abnkC} and \eqref{akbk}, we deduce that 
		\begin{equation*}
		\left( \Lambda_n(\ell) -\Lambda_n(\ell-1,s) \right)^+\le \frac{C}{n} \quad \mbox{and} \quad \left( \Lambda_n(\ell) -\Lambda_n(\ell-1,s) \right)^-\le \frac{C}{n}+ C\left(\frac{\ell}{n}-s\right),
		\end{equation*}
	\end{proof} 
	
	\begin{lemma}\label{lemtlks}
		Let $T>0$. Then there exists a constant $C$ such that for all  $n\ge 1$ and $0<\frac{\ell-1}{n}<s< \frac{\ell}{n}< \frac{k}{n}< t<\frac{k+1}{n}<T$, 
		\begin{equation*}
		-\frac{C}{n}-C\left(\frac{\ell}{n}-s\right) \le F_n\left(\frac{\ell}{n} \right)- F_n(s) \le \frac{C}{n} + \phi\left(\frac{\ell}{n} \right)- \phi\left(s \right)
		\end{equation*}
		and
		\begin{equation*}
		-C\left(t-\frac{k}{n}\right) \le F_n\left(t \right) - F_n\left(\frac{k}{n} \right) \le C\left(t-\frac{k}{n}\right) + \phi\left(t \right) -\phi\left(\frac{k}{n} \right),
		\end{equation*}
		where $\phi(t)=\P(\eta \le t)$ the distribution function of $\eta$. 
	\end{lemma}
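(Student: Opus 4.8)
The plan is to mimic the decomposition used in the proof of Lemma~\ref{lemA1A2}, but now comparing a grid value of $F_n$ with an intermediate value, and to feed the one-sided bounds on the increments of $\Lambda_n$ supplied by Lemma~\ref{touslamda} into the resulting estimates. For the first pair of inequalities I would start from the representations \eqref{Fnk1} and \eqref{Fnt1}, splitting the difference into an indicator part and an exponential part:
\[
F_n\left(\frac{\ell}{n}\right) - F_n(s) = \E\left[\left(\mathds{1}_{\eta \le \frac{\ell}{n}} - \mathds{1}_{\eta \le s}\right)\exp(\Lambda_n(\ell))\right] + \E\left[\mathds{1}_{\eta \le s}\left(\exp(\Lambda_n(\ell)) - \exp(\Lambda_n(s))\right)\right].
\]
The first term is nonnegative and, since Lemma~\ref{lem1} gives $\Lambda_n(\ell) \le 0$ and hence $\exp(\Lambda_n(\ell)) \le 1$, it is bounded above by $\phi(\ell/n) - \phi(s)$; this is exactly the indicator contribution producing the $\phi$-increment in the statement.

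For the second term I would factor out $\exp(\Lambda_n(s))$ (which lies in $[0,1]$) and use the elementary inequalities $e^x - 1 \le x e^x$ and $e^x - 1 \ge x$ together with $\mathds{1}_{\eta \le s} \le 1$. The upper estimate then reads
\[
\exp(\Lambda_n(\ell)) - \exp(\Lambda_n(s)) \le \exp\left(\left(\Lambda_n(\ell) - \Lambda_n(s)\right)^+\right) - 1 \le \frac{C}{n}\,e^{C/n},
\]
invoking the bound $(\Lambda_n(\ell) - \Lambda_n(s))^+ \le C/n$ from Lemma~\ref{touslamda}, while the lower estimate uses $\exp(\Lambda_n(\ell) - \Lambda_n(s)) - 1 \ge \Lambda_n(\ell) - \Lambda_n(s) \ge -(\Lambda_n(\ell) - \Lambda_n(s))^-$ together with $(\Lambda_n(\ell) - \Lambda_n(s))^- \le C/n + C(\ell/n - s)$. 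Collecting the two terms yields the first displayed double inequality after renaming constants.

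The second pair of inequalities, comparing $F_n(t)$ with $F_n(k/n)$ for $k/n < t < (k+1)/n$, follows the identical template with $(\ell,s)$ replaced by $(t,k)$. The decisive simplification is that Lemma~\ref{touslamda} gives $(\Lambda_n(t) - \Lambda_n(k))^+ = 0$, so $\exp(\Lambda_n(t)) - \exp(\Lambda_n(k)) \le 0$ and the exponential part contributes nothing to the upper bound, which is therefore dominated by the indicator term $\phi(t) - \phi(k/n)$. The lower bound again comes from $\exp(\Lambda_n(t) - \Lambda_n(k)) - 1 \ge -(\Lambda_n(t) - \Lambda_n(k))^- \ge -C(t - k/n)$.

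I do not expect a genuine obstacle here: the whole argument is a bookkeeping exercise in splitting each difference into an indicator part and an exponential part and invoking Lemma~\ref{touslamda}. The only point requiring care is the sign discipline, namely matching the $(\cdot)^+$ bounds to the upper estimates and the $(\cdot)^-$ bounds to the lower estimates, and keeping in mind that $\Lambda_n(\cdot) \le 0$ so that every factor $\exp(\Lambda_n(\cdot))$ lies in $[0,1]$. All the constants $C$ may be enlarged finitely many times, which is harmless on the fixed horizon $[0,T]$.
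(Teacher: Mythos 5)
Your proposal is correct and takes essentially the same route as the paper: the paper's own proof of Lemma~\ref{lemtlks} consists precisely of the ``easy adaptation'' of the indicator/exponential decomposition from the proof of Lemma~\ref{lemA1A2}, combined with the increment bounds $(\Lambda_n(\ell)-\Lambda_n(s))^{\pm}$ and $(\Lambda_n(t)-\Lambda_n(k))^{\pm}$ from Lemma~\ref{touslamda}, which is exactly what you carry out. Your write-up is in fact more explicit than the paper's terse chain of displayed inequalities, and your sign bookkeeping (in particular the observation that $(\Lambda_n(t)-\Lambda_n(k))^{+}=0$ makes the exponential part contribute nothing to the upper bound on $[\frac{k}{n},t]$) is handled correctly.
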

	\begin{proof} 
		Recall \eqref{Fnk1} and \eqref{Fnt1}. From an easy adaptation of the argument of the proof of Lemma \ref{lemA1A2} and from Lemma \ref{touslamda}, we have that 
		\begin{align*}
		\E\left(e^{[\Lambda_n(\ell)-  \Lambda_n(\ell-1,s)]^-} -1\right)&\le F_n\left(\frac{\ell}{n} \right)- F_n(s) \le \P\left(s<\eta \le \frac{\ell}{n} \right)+ \E\left(e^{[\Lambda_n(\ell)-  \Lambda_n(\ell-1,s)]^+} -1\right)
		\\- \E\left([\Lambda_n(\ell)-  \Lambda_n(\ell-1,s)]^-\right) &\le F_n\left(\frac{\ell}{n} \right)- F_n(s) \le \phi\left(\frac{\ell}{n} \right)- \phi\left(s \right)+ C\E\left([\Lambda_n(\ell)-  \Lambda_n(\ell-1,s)]^+ \right)\\
		-\frac{C}{n}-C\left(\frac{\ell}{n}-s\right) &\le F_n\left(\frac{\ell}{n} \right)- F_n(s) \le \frac{C}{n} +\phi\left(\frac{\ell}{n} \right)- \phi\left(s \right).
		\end{align*}
		In the same way, we get the other assertion.
	\end{proof} 
	
	We can now turn to the
	
	$\bf{Proof \ of \  Proposition}$  \ref{prdep} :   By combining Lemmas \ref{lemlk}, \ref{lemtlks} and the fact that 
	\begin{equation*}
	F_n(t)-F_n(s) = F_n(t)-F_n\left(\frac{k}{n} \right)+F_n \left(\frac{k}{n} \right)-F_n\Big(\frac{\ell}{n}\Big)+F_n\Big(\frac{\ell}{n}\Big)-F_n(s),
	\end{equation*}
	we deduce that 
	\begin{align*}
	-\frac{C}{n} -C(t-s) &\le F_n(t)-F_n(s) \le  C\left(t-\frac{\ell}{n}\right)+ \phi(t) -\phi(s)+ \frac{C}{n}.
	\end{align*}
	It follows that
	\begin{align*}
	-\frac{C}{n} -C(t-s) &\le F_n(t)-F_n(s) \le C(t-s)+ \phi(t) -\phi(s)+ \frac{C}{n}.
	\end{align*}
	The desired result follows
	$\hfill \blacksquare$  
	
	Recall that the goal of this subsection is to prove the convergence of the sequence $(F_n)_{n\ge1}$ towards $F$, the unique solution of equation \eqref{ef1}. For $T>0$, we define $w_{T}^\prime(x,.)$ the modulus of continuity of $x$ $\in$ $D([0, +\infty))$ on the interval $[0,T]$ by
	\begin{equation*}
	{w}_{T}^\prime(x,\delta)= \inf \ \max_{0\le i<m}\sup_{{ t_i\leq s< t\leq t_{i+1}}} |x(t)-x(s)|, 
	\end{equation*}
	where the infimum is taken over the set of all increasing sequences  $0=t_0<t_1< \cdot \cdot \cdot <t_m= T $   with the property that $\inf_{0\leq i< m}| t_{i+1}-t_i|\geq \delta$.  Let $\{x_n, n\ge1 \}$ be a sequence function in $D([0, +\infty))$. The following result is a version of Theorem 12.3 from  \cite{Bill} :
	\begin{proposition}
		Let $T>0$. A necessary and sufficient condition for the sequence $\{x_n, n\ge 1\}$ to be relatively compact in $D([0, +\infty))$ is that 
		\begin{equation*}
		\quad \quad (i)\ \sup_{n\ge 1} \sup_{0\le t\le T} |x_n(t)|<+\infty
		\end{equation*}
		and
		\begin{equation*}
		\quad \quad (ii)\  \lim_{\delta\rightarrow 0} \limsup_{n\rightarrow +\infty} {w}_{T}^\prime(x_n,\delta)=0.
		\end{equation*}
	\end{proposition}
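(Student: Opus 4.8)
The plan is to deduce this from the compact-interval compactness criterion, Theorem 12.3 of Billingsley \cite{Bill}, by the standard localization of the Skorohod space $D([0,\infty))$ to the spaces $D([0,T])$. First I would recall that the Skorohod topology on $D([0,\infty))$ is determined by the restrictions to compact time intervals: a sequence $\{x_n\}$ converges in $D([0,\infty))$ to $x$ precisely when $x_n|_{[0,T]}\to x|_{[0,T]}$ in $D([0,T])$ for every $T$ that is a continuity point of $x$, and consequently $\{x_n\}$ is relatively compact in $D([0,\infty))$ if and only if, for each $T>0$ in a suitable co-countable set, the restricted sequence $\{x_n|_{[0,T]}\}$ is relatively compact in $D([0,T])$. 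This reduces the claim to a fixed compact interval.

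On $[0,T]$, after the affine rescaling $t\mapsto t/T$ (a homeomorphism of the two Skorohod spaces that carries $w_T^\prime$ onto the modulus used in \cite{Bill}), Theorem 12.3 applies: a family is relatively compact in $D([0,T])$ if and only if it is uniformly bounded, which is condition $(i)$, and its modified modulus of continuity tends to zero uniformly in $n$ as $\delta\to 0$, which is condition $(ii)$. Thus necessity and sufficiency of $(i)$--$(ii)$ on each $[0,T]$ are inherited directly from Billingsley; the only thing to check is that the conditions produced on $[0,T]$ coincide with those transported from $[0,1]$, which is immediate from the infimum-over-partitions definition of $w_T^\prime$. Necessity of $(i)$--$(ii)$ for the $D([0,\infty))$ statement is then the easy half: relative compactness on $[0,\infty)$ forces relative compactness of every restriction, hence $(i)$--$(ii)$ for every $T$.

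The main obstacle is the behaviour at the right endpoint $t=T$, where $D([0,T])$ treats a jump located exactly at $T$ differently from an interior jump, so that a naive restriction map $D([0,\infty))\to D([0,T])$ fails to be continuous when a jump sits at $T$. This is exactly the point handled in \cite{Bill} by truncating and ramping the paths down to zero on a short interval ending at $T$ (equivalently, by working only with a sequence $T_m\uparrow\infty$ of continuity points common to the relevant limits, which is legitimate since each path in $D([0,\infty))$ has at most countably many jumps). With this device the modified restrictions are continuous, so relative compactness on each $[0,T_m]$ can be assembled, via a diagonal extraction of subsequences converging on every $[0,T_m]$, into relative compactness on $[0,\infty)$; matching of the limits on overlapping intervals is automatic because a Skorohod limit on $[0,T_{m+1}]$ restricts to the one on $[0,T_m]$, which completes the sufficiency direction.
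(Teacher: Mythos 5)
You should first be aware that the paper contains no proof of this proposition: it is stated as ``a version of Theorem 12.3 from \cite{Bill}'' and invoked only in the sufficiency direction, to obtain the relative compactness of $(F_n)$ in Proposition \ref{Fncompact}. So the comparison is with the standard textbook derivation rather than with an in-paper argument. Your plan for sufficiency --- reduce to compact intervals via Theorem 12.3, work along continuity horizons $T_m\uparrow\infty$ to sidestep the discontinuity of the restriction map at a jump time, extract a diagonal subsequence, and patch the limits, re-choosing the $T_m$ to be continuity points of the limits so that consistency on overlaps holds --- is exactly the standard localization argument (it is essentially how Billingsley passes from $D([0,T])$ to $D([0,\infty))$ in his Section 16, and how Ethier--Kurtz prove the analogous criterion), and this direction of your sketch is sound modulo the routine verifications you defer.

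The genuine gap is in your necessity half. The sentence ``relative compactness on $[0,\infty)$ forces relative compactness of every restriction, hence $(i)$--$(ii)$ for every $T$'' is false, and it contradicts the obstacle you yourself identify one paragraph later. With the paper's definition of $w_T^\prime$, in which \emph{every} subinterval of the partition, including the last one $[t_{m-1},T]$, must have length at least $\delta$, condition $(ii)$ is genuinely not necessary at a fixed horizon $T$: take $x_n=\mathds{1}_{[1-1/n,\infty)}$ and $T=1$. Then $x_n\to\mathds{1}_{[1,\infty)}$ in $D([0,\infty))$, so the sequence is relatively compact; yet for every $\delta>0$ and $n>1/\delta$, any admissible partition of $[0,1]$ has $t_{m-1}\le 1-\delta<1-1/n$, so the oscillation of $x_n$ over the last subinterval equals $1$ and $w_1^\prime(x_n,\delta)=1$. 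Necessity therefore holds only at horizons $T$ that are continuity points of all subsequential limits (the co-countable set you mention), or else with the variant modulus in which the final subinterval is allowed to be shorter than $\delta$, as in Ethier--Kurtz; your appeal to ``every restriction'' cannot be repaired as stated. (A further small point: the paper's displayed modulus takes the supremum over $t_i\le s<t\le t_{i+1}$ with the right endpoint \emph{included}, under which $w_T^\prime(x,\delta)$ fails to tend to $0$ even for a single fixed $x$ with a jump in $(0,T)$; this is surely a typo for the half-open interval $[t_i,t_{i+1})$ of \cite{Bill}, and your ``immediate'' identification of the transported conditions silently assumes the corrected version.) Since the paper uses only sufficiency, these defects affect the statement as given rather than its role in the paper, but a complete proof of the proposition as literally stated is not possible, and your necessity step is where an attempted proof would fail.
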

	We now show that the sequence $(F_n)_{n\ge1}$ satisfies the assertions of the above Proposition.  
	\begin{proposition}\label{Fncompact}
		The sequence $(F_n)_{n\ge1}$ is relatively compact in $D([0, +\infty))$.
	\end{proposition}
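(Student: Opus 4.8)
The plan is to apply the compactness criterion recalled just above (the version of Theorem 12.3 of \cite{Bill}), so that it suffices to verify its two conditions (i) and (ii) for the sequence $(F_n)_{n\ge1}$ on each interval $[0,T]$, $T>0$. Condition (i) is immediate: each $F_n$ is nonnegative, being the expectation of a product of an indicator and an exponential (see \eqref{Fnk1} and \eqref{Fnt1}), while Lemma \ref{lem2} gives $F_n(t)\le F(t)\le 1$; hence $\sup_{n\ge1}\sup_{0\le t\le T}|F_n(t)|\le 1<+\infty$. The whole difficulty therefore lies in the modulus estimate (ii).

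For (ii), I would start from the two-sided increment bound of Proposition \ref{prdep},
\begin{equation*}
-\frac{C}{n}-C(t-s)\le F_n(t)-F_n(s)\le C(t-s)+\phi(t)-\phi(s)+\frac{C}{n},
\end{equation*}
valid for $0<s<t<T$, where $\phi(t)=\P(\eta\le t)$. Since $\phi$ is nondecreasing, this gives $|F_n(t)-F_n(s)|\le C(t-s)+\phi(t)-\phi(s)+C/n$. In this bound the term $C/n$ disappears under $\limsup_{n\to\infty}$, and the Lipschitz term $C(t-s)$ is made uniformly small by restricting to intervals of small length. The only term that can remain large on a short interval is $\phi(t)-\phi(s)$, which records the jumps of the distribution function of $\eta$: if $\eta$ has atoms, then $\phi$ has genuine discontinuities and $F_n$ inherits them. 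This is precisely the main obstacle, and it is exactly the kind of oscillation that the Skorohod modulus $w_T'$ is designed to ignore, provided the jump times are placed at the partition points.

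To exploit this, fix $\eps>0$. Since $\phi$ is nondecreasing and bounded by $1$, it has only finitely many jumps of size exceeding $\eps$ on $[0,T]$; call them $s_1<\cdots<s_p$. I would then choose a finite partition $0=t_0<t_1<\cdots<t_m=T$ that contains all the $s_j$ as partition points and is fine enough that each subinterval has length at most $\eps/C$ and $\phi$ increases by at most $\eps$ across the interior of each subinterval; adding points only refines the estimate, so such a partition exists and has a strictly positive minimal spacing $\delta_0>0$. For every $\delta\le\delta_0$ this partition is admissible in the infimum defining $w_T'(\cdot,\delta)$, and on each block the large jumps of $\phi$ sit at the endpoints and hence do not contribute to the within-block oscillation; combining with the increment bound gives, for each block $[t_i,t_{i+1})$ and $t_i\le s<t<t_{i+1}$,
\begin{equation*}
|F_n(t)-F_n(s)|\le C(t_{i+1}-t_i)+\big(\phi(t)-\phi(s)\big)+\frac{C}{n}\le 2\eps+\frac{C}{n}.
\end{equation*}
Taking the maximum over the blocks and then $\limsup_{n\to\infty}$ yields $\limsup_{n\to\infty}w_T'(F_n,\delta)\le 2\eps$ for all $\delta\le\delta_0$, whence $\lim_{\delta\to0}\limsup_{n\to\infty}w_T'(F_n,\delta)\le 2\eps$. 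As $\eps>0$ is arbitrary, condition (ii) holds, and the relative compactness of $(F_n)_{n\ge1}$ in $D([0,+\infty))$ follows. The crux is thus the control of $\phi(t)-\phi(s)$ across the atoms of $\eta$; once one recognizes that these are isolated by the $w_T'$-partition, the remainder is a routine combination of Proposition \ref{prdep} with the finiteness of the large jumps of a bounded monotone function.
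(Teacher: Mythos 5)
Your proof is correct and takes essentially the same route as the paper: condition (i) via Lemma \ref{lem2}, and condition (ii) by feeding the two-sided increment bound of Proposition \ref{prdep} into the modulus $w_T'$. The only difference is that where the paper sets $\psi(t)=\phi(t)+Ct$ and simply cites Billingsley for the standard fact that $w_T'(\psi,\delta)\to 0$ as $\delta\to 0$ for $\psi\in D([0,+\infty))$, you reprove that fact by hand, explicitly constructing the partition that isolates the large jumps of $\phi$ — a legitimate (and for monotone $\phi$, easy) unpacking of the cited result.
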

	\begin{proof}
		Condition $(i)$ follows from Lemma \ref{lem2}. Hence, it suffices to verify $(ii)$. To this end, let us define $\psi(t)=\phi(t)+Ct$, $\forall t>0$, where again $\phi$ is the distribution function on $\eta$. It follows from Proposition \ref{prdep}  that 
		\begin{equation}\label{FnsFnt}
		|F_n(t)- F_n(s)|\le \psi(t)-\psi(s)+\frac{C}{n}, \quad \forall t>s>0. 
		\end{equation} 
		It is easy to deduce from the definition of $w_{T}^\prime(.,.)$ and (\ref{FnsFnt}) that 
		\begin{equation*}  
		{w}_{T}^\prime(F_n,\delta) \le {w}_{T}^\prime(\psi,\delta)+\frac{C}{n}.
		\end{equation*} 
		Note that, since $\psi \in D([0, +\infty))$, ${w}_{T}^\prime(\psi,\delta) \rightarrow 0$ as $\delta\rightarrow 0$ (see Sect. 12, p. 123 in \cite{Bill}). Thus, the desired result follows. 
	\end{proof}
	
	We are now ready to state the main result of this section.
	\begin{proposition}
		As $n\longrightarrow +\infty$, $\{F_n(t), t>0\} \longrightarrow \{F(t), t>0\}$ in $D([0, +\infty))$, where $F$ is the unique solution of \eqref{ef1}. 
	\end{proposition}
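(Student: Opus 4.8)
The plan is to combine the relative compactness established in Proposition \ref{Fncompact} with the identification of every subsequential limit as the unique solution $F$ of \eqref{ef1}. Since $D([0,+\infty))$ with the Skorohod topology is such that relative compactness plus a unique limit point forces convergence of the whole sequence, it suffices to show that any subsequential limit solves \eqref{ef1}; uniqueness is then handed to us by Proposition \ref{unicite}. So the structure is: extract a convergent subsequence $F_{n_k}\to G$ in $D([0,+\infty))$, pass to the limit in the defining relation \eqref{Fnk}--\eqref{Fnt}, conclude $G=F$, and finally observe that the limit being independent of the subsequence upgrades subsequential convergence to full convergence.

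First I would fix $t>0$ and a subsequence along which $F_{n_k}\to G$. I would like to take limits in
\[
F_n(t) = \mathbb{E}\Bigg[\mathds{1}_{\eta \leq t}\exp\Bigg\{\sum_{\ell=1}^{k-1}\left(F_n\left(\tfrac{k-\ell}{n}\right)-1\right)\int_{\frac{\ell-1}{n}}^{\frac{\ell}{n}}\lambda(u)du-\int_{\frac{k-1}{n}}^{t}\lambda(u)du\Bigg\}\Bigg]
\]
with $k=\lceil nt\rceil$. The key observation is that the exponent is a Riemann-type sum converging to $\int_0^t\big(G(t-u)-1\big)\lambda(u)\,du$: the inner integrals $\int_{(\ell-1)/n}^{\ell/n}\lambda(u)\,du$ act as the mesh weights, the factors $F_n((k-\ell)/n)-1$ sample $G(t-u)-1$, and the leftover piece $\int_{(k-1)/n}^{t}\lambda(u)\,du$ vanishes as $n\to\infty$. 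To justify convergence of this sum I would exploit Proposition \ref{prdep}, which controls the increments of $F_n$ uniformly in $n$ by $C(t-s)+\phi(t)-\phi(s)+C/n$; this equicontinuity modulo the jumps of $\phi$ is exactly what lets one replace $F_n$ by its limit inside the sum while keeping the error small, using Assumption {(\bf H)} to bound $\lambda\le\lambda^*$ so that the whole exponent stays bounded and dominated convergence applies to the outer expectation.

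The main obstacle is the interaction between Skorohod convergence and the evaluation at the continuously-varying argument $t-u$ inside the integral: Skorohod convergence does not in general imply pointwise convergence at jump points of the limit, and $G$ may inherit jumps from $\phi=\P(\eta\le t)$. The clean way around this is to note that the integration against $\lambda(u)\,du$ smooths things out — the set of discontinuity points of $G$ is at most countable, hence Lebesgue-null, so $F_{n_k}((k-\ell)/n)\to G(t-u)$ for almost every $u$, which is enough for the Riemann-sum-to-integral passage. One must also check that the indicator $\mathds{1}_{\eta\le t}$ and the random exponent combine measurably and that the bound from {(\bf H)} gives a dominating function independent of $k$; these are routine once the a.e.\ convergence of the exponent is in hand. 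Having shown $G$ satisfies \eqref{ef1} and takes values in $[0,1]$ (inherited from $0\le F_n\le 1$ via Lemma \ref{lem2}), Proposition \ref{unicite} forces $G=F$, and since the limit does not depend on the chosen subsequence, the entire sequence $(F_n)$ converges to $F$ in $D([0,+\infty))$.
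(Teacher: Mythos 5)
Your proposal is correct and follows essentially the same route as the paper: relative compactness from Proposition \ref{Fncompact}, identification of any subsequential limit as a $[0,1]$-valued solution of \eqref{ef1} by passing to the limit in \eqref{Fnt} — controlling the discretization error via the increment bound of Proposition \ref{prdep} (through $\psi(t)=\phi(t)+Ct$), using the a.e.\ pointwise convergence that Skorohod convergence yields off the (countable, Lebesgue-null) discontinuity set of the limit, and applying dominated convergence under Assumption {(\bf H)} and Lemma \ref{lem2} — then invoking uniqueness from Proposition \ref{unicite}. Your explicit remark that the limit's independence of the subsequence upgrades subsequential to full convergence is a point the paper leaves implicit, but it is the same argument.
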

	\begin{proof}
		From Proposition \ref{Fncompact}, we deduce that at least along a subsequence (but we use the same notation for the subsequence as for the sequence), $F_n$ converges towards a limit denoted by $J$ where $J$ is continuous on the right and admits a limit on the left. In order to show that $F = J$, it suffices to prove that $J$ is a solution of equation \eqref{ef1} and then use Proposition \ref{unicite}. Indeed, let us rewrite \eqref{Fnt} in the form 
		\begin{equation*}
		F_n(t) = \mathbb{E}\Bigg[\mathds{1}_{\eta \leq t}\exp\Bigg\{ \int_{0}^{\frac{\lfloor nt \rfloor-1}{n}} \left(F_n\left(\frac{ \lfloor nt \rfloor -\lceil nu\rceil}{n}\right)-1\right)\lambda(u)du\Bigg\}-\int_{\frac{ \lfloor nt \rfloor-1}{n}}^{t}\lambda(u)du\Bigg].
		\end{equation*}
		Thus, it only remains to show that 
		$$F_n(t) \longrightarrow J(t)= \E\left[ \mathds{1}_{\eta \leq t}\exp\Bigg\{\int_{0}^{t}\Big(J(t-u)-1\Big)\lambda(u) du\Bigg\}\right], \ \mbox{as} \ n\rightarrow +\infty$$ 
		to obtain the desired result. To this end, we note that 
		\begin{align*}
		&\int_{0}^t \left|F_n\left(\frac{ \lfloor nt \rfloor -\lceil nu\rceil}{n}\right)- J(t-u) \right| \lambda(u) du\le \\ 
		&\int_{0}^t \left|F_n\left(\frac{ \lfloor nt \rfloor -\lceil nu\rceil}{n}\right)- F_n(t-u) \right|\lambda(u) du+ \int_{0}^t |F_n(t-u)-J(t-u)| \lambda(u)du\\
		&\le \lambda^* \int_{0}^t \psi(t-u) - \psi\left(t-u-\frac{2}{n} \right) du +  \frac{C}{n}  \lambda^* t+  \lambda^* \int_{0}^t  |F_n(t-u)-J(t-u)| du,
		\end{align*}
		where we have used \eqref{FnsFnt} in the last inequality. Since $\psi$ is left continuous and locally bounded, the first term tends to $0$ as $n\to\infty$,  thanks to Lebesgue's dominated convergence theorem.
		The second term tends clearly to $0$. From the convergence in $D$ for the Skorohod topology,  
		$F_n(t-u)\to J(t-u)$ $du$ a.e.. Moreover Lemma \ref{lem2} allows us to use Lebesgue's dominated convergence theorem again, and the result follows. 
\end{proof}

\paragraph{Acknowledgement} The authors thank Aur\'elien Velleret, whose computations confirmed the results shown 
in section \ref{section54}, , and the Centre National de la Recherche Scientifique, which has supported the visits of Anicet Mougabe and Ibrahima Dram\'e in Marseille within the program DSCA, jointly with the Institut de Math\'ematiques de Marseille.


\begin{thebibliography}{99}
	\bibitem{AtNe}
	Athreya, K.~B. and Ney, P.~E. 
	\newblock {\em Branching processes}, Springer- Verlag, New York, 1972.
	\bibitem{Bill}
	Billingsley, P. 
	\newblock{\em Convergence of Probability Measures}, 2nd ed.
	\newblock John Wiley, New York, 1999.
	\bibitem{CM68} Crump K.S., Mode C.J., {A general age-dependent branching process I.} \emph{J. Math. Anal. and Applic.} {\bf24}, 494--508, 1968. 
	\bibitem{CM69} Crump K.S., Mode C.J., { A general age-dependent branching process. II},  \emph{J. Math. Anal.
	and Applic.} \textbf{25}, 8--17, 1969.
	\bibitem{RPE} Forien R., Pang G., Pardoux E., Epidemic models with varying infectivity,  \emph{SIAM J. Applied Math.} {\bf81}, pp. 1893--1930, 2021.
	\bibitem{RGE3} Forien R., Pang G., Pardoux E., {Multi-patch multi-group epidemic model with varying infectivity},  \emph{Probability, Uncertainy and Quantitative Risk} {\bf7}, 333--364, 2022. 
	\bibitem{QGM20}
	Griette Q., Liu Z., Magal P., Thompson R.N., {Real-time prediction of the end of an epidemic wave: COVID-19 in China as a case-study,} \emph{Mathematics of Public Health}, V. Kumar Murty, Jianhong Wu eds., Fields Institute Communications, Springer 2022.
	\bibitem{IW}
	Iwasa, Y., Nowak, M.~A., and Michor, F.
	\newblock {Evolution of resistance during clonal expansion}, \emph{Genetics} {\bf172}, 2557–2566, 2006.
	\bibitem{KMK} Kermack O. and McKendrick A. G. {A contribution to the mathematical theory of epidemics}, \emph{Proc. of the Royal Soc. Series A}, {\bf115}, pp. 700--721, 1927.
	\bibitem{PE} Pardoux E. 
	\newblock{\em Markov processes and applications: algorithms, networks, genome and finance,} John Wiley \& Sons, 2008.
\end{thebibliography}
\end{document}